\theoremstyle{plain}
\newtheorem{theorem}{Theorem}[section]
\newtheorem*{theorem*}{Theorem}
\newtheorem{lemma}[theorem]{Lemma}
\newtheorem{proposition}[theorem]{Proposition}
\newtheorem{corollary}[theorem]{Corollary}
\theoremstyle{definition}
\newtheorem{definition}[theorem]{Definition}
\theoremstyle{remark}
\newtheorem{remark}[theorem]{Remark}
\newtheorem{example}[theorem]{Example}
\newtheorem*{acknowledgments}{Acknowledgments}
\numberwithin{equation}{section}
\numberwithin{figure}{section}
\newcommand{\bd}{\begin{description}}   
\newcommand{\ed}{\end{description}} 
\newcommand{\ba}{\begin{array}}      \newcommand{\ea}{\end{array}} 
\newcommand{\bc}{\begin{center}}     \newcommand{\ec}{\end{center}} 
\newcommand{\be}{\begin{enumerate}}  \newcommand{\ee}{\end{enumerate}} 
\newcommand{\beq}{\begin{eqnarray}}  \newcommand{\eeq}{\end{eqnarray}} 
\newcommand{\beQ}{\begin{eqnarray*}} \newcommand{\eeQ}{\end{eqnarray*}} 
\newcommand{\bi}{\begin{itemize}}    \newcommand{\ei}{\end{itemize}}
\newcommand\inv{\mathrm{Inv}}
\def\co{\colon\thinspace}
\def\ker{\mathop{\mathrm{Ker}}\nolimits}
\begin{document} 
\title{Riordan trees and the homotopy $sl_2$ weight system} 

\author[J.B. Meilhan]{Jean-Baptiste Meilhan} 
\address{Univ. Grenoble Alpes, IF, F-38000 Grenoble, France}
         \email{jean-baptiste.meilhan@ujf-grenoble.fr}
\author[S. Suzuki]{Sakie Suzuki} 
\address{The Hakubi Center for Advanced Research/Research Institute for Mathematical Sciences, Kyoto University, Kyoto, 606-8502, Japan. }
         \email{sakie@kurims.kyoto-u.ac.jp}
         
\begin{abstract} 
The purpose of this paper is twofold. 
On one hand, we introduce a modification of the dual canonical basis for invariant tensors of the $3$-dimensional irreducible representation of $U_{q}(sl_{2})$, 
given in terms of Jacobi diagrams, a central tool in quantum topology. 
On the other hand, we use this modified basis to study the so-called homotopy $sl_{2}$ weight system, 
which is its restriction to the space of Jacobi diagrams labeled by distinct integers. 
Noting that the $sl_{2}$ weight system is completely determined by its values on trees, 
we compute the image of the homotopy part on connected trees in all degrees;  
the kernel of this map is also discussed.  
\end{abstract} 

\maketitle
%

\section{Introduction}
The $sl_2$ weight system $W$ is a $\mathbb{Q}$-algebra homomorphism from the space $\mathcal{B}(n)$ of Jacobi diagrams labeled by $\{1,\ldots,n\}$ 
to the algebra $\inv\left(S(sl_2)^{{\otimes n}}\right)$ of invariant tensors of the symmetric algebra $S(sl_2)$. 
The relevance of this construction lies in low dimensional topology. 
Jacobi diagrams form the target space for the Kontsevich integral $Z$, which is universal among finite type and quantum invariants of knotted objects : 
in particular, by postcomposing $Z$ with the $sl_2$ weight system 
and specializing each factor at some finite-dimensional representation of quantum group $U_{q}(sl_{2})$, one recovers the colored Jones polynomial. 
Hence, while the results of this paper are purely algebraic, we will see that they are motivated by, 
and have applications to, quantum topology -- see Remark \ref{rem:topopipo} at the end of this introduction. 

An easy preliminary observation on the $sl_2$ weight system is the following.
\begin{lemma}\label{prop:connected}
 The $sl_2$ weight system is determined by its values on connected trees, i.e. connected and simply connected Jacobi diagrams.  
\end{lemma}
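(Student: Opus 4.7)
The argument rests on two facts. The first is that the disjoint-union product makes $\mathcal{B}(n)$ a $\mathbb{Q}$-algebra generated by its connected Jacobi diagrams; since $W$ is an algebra homomorphism, $W(D_1 \sqcup D_2) = W(D_1)\cdot W(D_2)$, so $W$ is determined on arbitrary diagrams by its values on connected ones. It therefore suffices to show that the value of $W$ on any connected Jacobi diagram is determined by its values on connected trees.

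The second ingredient is the \emph{$sl_2$ bubble reduction}: if $D$ contains a bubble, i.e.\ two trivalent internal vertices joined by a pair of parallel edges, then $W(D) = \lambda \cdot W(D')$ for some scalar $\lambda \in \mathbb{Q}$, where $D'$ is obtained from $D$ by collapsing the bubble to a single edge. This is a standard consequence of the fact that the endomorphism $\sum_i \ad(x_i)\ad(x^i)$ of $sl_2$ acts as a scalar (the Casimir on the adjoint representation), a computation local to the two vertices and four edges forming the bubble.

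With these in hand, I would induct on the first Betti number $b_1(D)$ of a connected Jacobi diagram $D$. The base case $b_1(D) = 0$ is exactly when $D$ is a connected tree. For $b_1(D) \geq 1$, pick an embedded cycle in the underlying graph of $D$ and repeatedly apply the IHX relation -- valid in $\mathcal{B}(n)$ itself, hence compatible with $W$ -- at vertices incident to this cycle. Each IHX move replaces a length-$k$ cycle by a combination of length-$(k-1)$ cycles, so iterating expresses $D$ modulo IHX as a $\mathbb{Q}$-linear combination of connected diagrams each containing a bubble. Applying the bubble reduction then yields $W(D)$ as a linear combination of $W(D_j)$ with $b_1(D_j) < b_1(D)$, and the induction closes.

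The principal technical point is the cycle-shortening step: one must verify that a suitable sequence of IHX moves does reduce an arbitrary embedded cycle to a bubble while keeping the intermediate diagrams connected. I would expect this bookkeeping to be the part requiring the most care, though it is purely combinatorial and routine once the reduction on $b_1$ is set up.
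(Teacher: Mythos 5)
Your opening reduction -- $W$ is an algebra map for the disjoint-union product, so it suffices to treat connected diagrams -- is fine and is also how the paper proceeds. The genuine gap is the cycle-shortening step, and it is not mere bookkeeping: the claim that ``each IHX move replaces a length-$k$ cycle by a combination of length-$(k-1)$ cycles'' is false. Applying IHX at an internal edge of the cycle produces \emph{one} term in which the two adjacent cycle-edges are paired at a single new vertex (cycle length $k-1$) and \emph{one} term whose cycle still has length $k$, with the two appendages at that edge transposed. So each move only yields a relation of the shape $D \mp D^{\tau} = \pm H$, i.e.\ permuting the appendages around the cycle changes $D$ by shorter-cycle terms. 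Combining this with the reflection symmetry of the cycle (which costs $(-1)^k$ by AS) gives $2D \equiv (\textrm{shorter-cycle terms})$ when $k$ is \emph{odd}, but nothing when $k$ is even. The failure for even $k$ is real, not a sign subtlety: take the $4$-wheel $w_4$ (a square with one leg at each vertex, labels $1,2,3,4$). Its only internal edges are the four cycle edges, so your toolkit has no further moves available, and $w_4$ is not a linear combination of bubble-containing diagrams modulo AS and IHX. One way to see this is to evaluate the $\mathfrak{sl}_N$ weight system: $W_{\mathfrak{sl}_N}(w_4)$ contains double-trace terms $2\,\mathrm{tr}(x_ix_j)\mathrm{tr}(x_kx_l)$ with coefficients independent of $N$, whereas any bubble-containing $1$-loop diagram evaluates, after your Casimir reduction, to $2N$ times the value of a connected tree, i.e.\ to $N$ times a single trace of iterated brackets; for large $N$ these cannot match. (AS and IHX preserve the first Betti number, so trees cannot enter such an identity either.)

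This is precisely why the paper does not rely on IHX here but on the Chmutov--Varchenko relation (Lemma \ref{lem:CV}), a relation satisfied by the $sl_2$ weight system that is strictly stronger than IHX plus your bubble reduction: it deletes the two trivalent endpoints of any internal edge outright, turning a cycle of minimal length $l$ into a term with a cycle of length $l-2$ plus a term with one fewer cycle. The proof of Proposition \ref{prop:Bsl2} is then a double induction on the number of cycles and on $l$, with the degenerate cases $l=0$ and $l=1$ handled by the $\bigcirc_3$ and AS relations. Your bubble reduction is exactly the case of CV applied to a length-$2$ cycle, but on its own it is too weak; to repair your argument, replace the IHX cycle-shortening by an application of CV at a cycle edge.
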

\noindent (Although this result might be well-known, a proof is given in Section \ref{sec:Bsl2}.)  

In this paper, we focus on the \emph{homotopy part} $\mathcal{B}^h(n)$, which is generated by diagrams labeled by distinct elements in $\{1,\ldots,n\}$. 
Here, the terminology alludes to the link-homotopy relation on (string) links, which is generated by self crossing changes. 
It was shown by Habegger and Masbaum \cite{HMa} that the restriction of the Kontsevich integral to $\mathcal{B}^h(n)$ 
is a link-homotopy invariant, and is deeply related to Milnor link-homotopy invariants, which are classical invariants generalizing the linking number. 

Let us state our main results on the \emph{homotopy $sl_2$ weight system}, 
that is, the restriction of the $sl_2$ weight system to $\mathcal{B}^h(n)$.  
Owing to Lemma \ref{prop:connected}, we can fully understand this map by studying the restrictions
 $$ W^{h}_n\co \mathcal{C}_{n} \to \inv(sl_{2}^{{\otimes n}})$$ 
of the $sl_2$ weight system to the space $\mathcal{C}_{n}$ of connected trees with $n$ univalent vertices labeled by distinct elements in $\{1,\ldots,n\}$.
Here, the target space $\inv(sl_{2}^{{\otimes n}})$ is the invariant part of the $n$-fold tensor power of the adjoint representation (the $3$-dimensional irreducible representation) of $sl_{2}$. 
Recall that the dimension of $\mathcal{C}_{n}$ is given by $(n-2)!$, 
while the dimension of $\inv(sl_{2}^{{\otimes n}})$ is known to be the so-called \cite{B} Riordan numbers $R_n$ 
which can be defined by $R_2=R_3=1$ and $R_n= (n-1)(2R_{n-1}+3R_{n-2})/(n+1)$. 
These numbers are also found under the name of Motzkin sums, or ring numbers in the literature. 

More generally, we have:
\begin{theorem}\label{1}
\begin{itemize}
\item[\rm{(i)}]
The weight system map $W^{h}_n$ is injective if and only if $n\le 5$. 
\item[\rm{(ii)}]
For $n$ odd and $n=2$, the weight system map $W^{h}_n$ is surjective.  
\item[\rm{(iii)}]
For $n\geq 4$ even,  $W^{h}_n$ has a $1$-dimensional cokernel, spanned by $c^{\otimes n}$, where  $c=\frac{1}{2}h\otimes h +e\otimes f+f\otimes e \in \inv (sl_{2}^{\otimes 2})$. 
\end{itemize}
\end{theorem}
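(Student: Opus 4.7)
The plan is to combine a dimension count with an analysis in the modified dual canonical basis of $\inv(sl_{2}^{\otimes n})$ constructed in the preceding sections of the paper. Starting with part (i), one compares $\dim\mathcal{C}_{n}=(n-2)!$ with $\dim\inv(sl_{2}^{\otimes n})=R_{n}$: the stated recursion gives $(n-2)!>R_{n}$ for all $n\ge 6$, forcing $W^{h}_n$ to have a nontrivial kernel and hence proving non-injectivity on that range. In the complementary range $n\in\{2,3,4,5\}$ one has $(n-2)!\le R_n$ and injectivity must be checked by hand: the cases $n=2,3$ are trivial since $\mathcal{C}_n$ is one-dimensional and $W^{h}_n$ is nonzero on it, $n=4$ reduces to the linear independence of the images of two explicit trees, and $n=5$ to checking that the six elements of $\mathcal{C}_5$ are mapped to a basis of $\inv(sl_{2}^{\otimes 5})$. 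The latter verification simultaneously establishes the $n=5$ instance of (ii).

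For the remaining content of (ii) and (iii), I plan to work entirely in the modified dual canonical basis of $\inv(sl_{2}^{\otimes n})$ introduced earlier in the paper. This basis is indexed by Jacobi-type diagrams and stratifies naturally according to the multiset of labels at the univalent vertices. Basis elements whose labels form exactly the set $\{1,\ldots,n\}$ lie tautologically in the image of $W^{h}_n$, since they are represented by honest connected trees with distinct labels. For the remaining basis elements, the plan is to use the STU and IHX relations in the kernel of $W$ to rewrite each of them as a combination of distinct-label trees. Reading $c^{\otimes n}$ in the statement as shorthand for $c^{\otimes n/2}\in\inv(sl_{2}^{\otimes n})$ when $n$ is even, this element should correspond under the basis identification to the perfect-matching basis vector, and the whole point will be that it is the unique basis element that does not admit such a rewriting.

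The crux of (iii), and the main obstacle, is to prove that $c^{\otimes n/2}$ genuinely lies outside $\mathrm{Im}\,W^{h}_n$ for $n\ge 4$ even. The natural route is to construct a linear functional $\varphi\co\inv(sl_{2}^{\otimes n})\to\modQ$ that vanishes on $\mathrm{Im}\,W^{h}_n$ but is nonzero on $c^{\otimes n/2}$; the canonical candidate is the coefficient of the perfect matching in the modified dual canonical basis, i.e.\ an $(n/2)$-fold iterated Killing-form pairing summed over matchings. Showing $\varphi\circ W^{h}_n=0$ on every connected tree with distinct labels should follow from a parity or cancellation argument controlled by the structure constants of $sl_{2}$: expanding $W(T)$ at each trivalent vertex of a connected tree $T\in\mathcal{C}_n$ produces a signed sum whose pairing with a perfect matching of distinct labels cancels because the underlying graph of $T$ is connected and simply connected rather than a disjoint union of edges. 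For $n$ odd, a perfect matching of $\{1,\ldots,n\}$ does not exist and no such obstruction arises.

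To close (ii) and (iii), it remains to produce enough preimages. The plan is an induction on $n$: given a spanning family of preimages for $W^{h}_{n-2}$, I would extend each preimage to an element of $\mathcal{C}_n$ by grafting a three-vertex $Y$-tree carrying the two new labels, and verify that the resulting images fill all of $\inv(sl_{2}^{\otimes n})$ in the odd case and all but the line spanned by the perfect matching in the even case. I expect the perfect-matching vanishing argument of the previous paragraph to be the delicate step, since the functional $\varphi$ and its vanishing on $\mathcal{C}_n$ must be controlled uniformly in $n$, and the inductive gluing step must correctly track the kernel of $\varphi$ at each stage to confirm that the codimension of the image is exactly one rather than larger.
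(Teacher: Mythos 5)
Your part (i) matches the paper's argument: non-injectivity for $n\ge 6$ is the dimension count $(n-2)!>R_n$, and injectivity for $n\le 5$ is a finite rank check (the paper performs it by expanding linear trees in the Riordan-tree basis of Theorem~\ref{JBbasis} via the Chmutov--Varchenko relation). Your idea of a linear functional detecting $c^{\otimes n/2}$ for (iii) is also in the spirit of the paper, which defines $\phi$ to be $0$ on the basis vectors $W(T)$ with $T$ containing a trivalent vertex and $1$ on those that are disjoint unions of struts, and proves $W(\mathcal{C}_n)\subset\ker\phi$.

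There is, however, a genuine gap in your route to surjectivity in (ii) and (iii), coming from a misreading of the tree basis $\mathfrak{I}_n$. Every Riordan tree of order $n$ carries each label $1,\dots,n$ exactly once, so the basis does not stratify by label multiset; what varies is the number of connected components, and a generic Riordan tree is a \emph{disconnected} union of linear trees. Such basis elements are therefore not ``tautologically in the image of $W^h_n$,'' since the domain $\mathcal{C}_n$ consists of \emph{connected} trees only. Your proposed remedy --- rewriting the remaining basis elements via STU and IHX --- cannot work: AS and IHX are the only relations in $\mathcal{B}(n)$ and both preserve the number of connected components, so no identity internal to $\mathcal{B}(n)$ converts a disconnected tree into a combination of connected ones. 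The indispensable ingredient, absent from your proposal, is the Chmutov--Varchenko relation (Lemma~\ref{lem:CV}), which holds for the $sl_2$ weight system specifically; the paper's Lemma~\ref{c1} uses it to merge components, showing $W(T)\in W(\mathcal{C}_n)$ for any $T$ with at least one trivalent vertex, which is exactly what puts $\mathfrak{I}_n^{Y}$ in the image. The same relation drives the vanishing $\phi(W(T))=0$ for connected $T$ (repeated CV reduces $T$ to an alternating sum of strut unions whose signs cancel), so your ``parity or cancellation'' step would also need to be routed through CV; note moreover that this cancellation requires a functional taking the \emph{same} value on every strut union, i.e.\ the sum of the coordinates over all strut-union basis vectors rather than the single coordinate of one perfect matching. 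Once these points are in place the grafting induction in your last paragraph is unnecessary, and as stated it does not address the component-merging problem either.
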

The dimensions of $\mathcal{C}_n$, $\inv (sl_2^{\otimes n})$ and $\ker W^{h}_n$ are given in Table \ref{table1}.  
\begin{table}[h!]
\begin{center}
\begin{tabular}{|c|c|c|c|c|c|c|c|c|c|c|}
\hline
 $n$                            & 2 & 3 & 4 & 5 & 6   & 7     & 8     & 9 & $k$\\ 
\hline
$\dim \mathcal{C}_n$   & 1 & 1 & 2 & 6 & 24 & 120 & 720 &  5040 & $(k-2)!$ \\
\hline
$\dim \inv (sl_2^{\otimes n})$     & 1 & 1 & 3 & 6 & 15 & 36   &  91 & 232 & $R_k$\\
\hline
$\dim \ker W^{h}_n$     & 0 & 0 & 0 & 0 & 10 & 84   &  630 & 4808 & $(k-2)! - R_k + \frac{1+ (-1)^k}{2}$\\[0.1cm]
\hline
\end{tabular}
\end{center}
\caption{The dimensions of $\mathcal{C}_n$, $\inv (sl_2^{\otimes n})$ and $\ker W^{h}_n$.} \label{table1}
\end{table}

Let $\mathfrak{S}_{n}$ be the symmetric group in $n$ elements.
The spaces $\mathcal{C}_{n}$ and $\mathrm{Inv}(sl_{2}^{\otimes n})$ have  $\mathfrak{S}_{n}$-module structures, such that $\mathfrak{S}_{n}$ acts on $\mathcal{C}_{n}$  by permuting the labels,
and acts  on  $\mathrm{Inv}(sl_{2}^{\otimes n})$ by permuting the factors.
The $sl_{2}$ weight system is a  $\mathfrak{S}_{n}$-module homomorphism, and the characters  $\chi_{\mathcal{C}_{n}}$ and $\chi_{\mathrm{Inv}(sl_{2}^{\otimes n}) }$ are already known (see Lemma \ref{sn} and Proposition \ref{Kon}).
Thus, by Theorem \ref{1}, we can determine the character $\chi_{\mathrm{ker} (W_n^h)}$ of the kernel of $W_{n}^{h}$ as follows.
\begin{corollary}\label{cha}
\rm{(i)} For $n=2$ or  $n> 2$ odd, we have
$$\chi_{\mathrm{ker} (W_n^h)}=\chi_{\mathcal{C}_{n}}-\chi_{\mathrm{Inv}(sl_{2}^{\otimes n})}
\quad\textrm{and} \quad\chi_{\mathrm{Im} (W_n^h)}=\chi_{\mathrm{Inv}(sl_{2}^{\otimes n}) }.$$
\rm{(ii)} For $n\geq 4$ even, we have
$$\chi_{\mathrm{ker} (W_n^h)}=\chi_{\mathcal{C}_{n}}-\chi_{\mathrm{Inv}(sl_{2}^{\otimes n})}+ \chi_{U}
\quad\textrm{and} \quad 
\chi_{\mathrm{Im} (W_n^h)}=\chi_{\mathrm{Inv}(sl_{2}^{\otimes n}) }- \chi_{U},$$
where $U$ is the trivial representation.
\end{corollary}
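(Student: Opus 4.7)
\noindent\emph{Proof plan.} The plan is to combine the image/cokernel information from Theorem \ref{1} with the additivity of characters on short exact sequences of $\mathfrak{S}_n$-modules, which holds because $\mathbb{Q}[\mathfrak{S}_n]$ is semisimple. Since $W_n^h$ is $\mathfrak{S}_n$-equivariant, the two short exact sequences
$$0 \to \mathrm{ker}(W_n^h) \to \mathcal{C}_n \to \mathrm{Im}(W_n^h) \to 0, \qquad 0 \to \mathrm{Im}(W_n^h) \to \inv(sl_2^{\otimes n}) \to \mathrm{coker}(W_n^h) \to 0$$
are sequences of $\mathfrak{S}_n$-modules, yielding
$$\chi_{\mathrm{ker}(W_n^h)} = \chi_{\mathcal{C}_n} - \chi_{\inv(sl_2^{\otimes n})} + \chi_{\mathrm{coker}(W_n^h)}.$$
The proof thus reduces to identifying $\mathrm{coker}(W_n^h)$ as an $\mathfrak{S}_n$-module.

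For part (i), Theorem \ref{1}(ii) gives $\mathrm{coker}(W_n^h) = 0$, and both formulas follow at once. For part (ii), Theorem \ref{1}(iii) asserts that the cokernel is one-dimensional, so as a $\mathfrak{S}_n$-module it is either the trivial representation $U$ or the sign representation. I would rule out the sign representation by observing that $c = \frac{1}{2}h\otimes h + e\otimes f + f\otimes e$ is manifestly symmetric under the swap of its two tensor factors. Consequently the generator $c^{\otimes n/2} \in \inv(sl_2^{\otimes n})$ of the cokernel, viewed as a pairing of the $n$ positions into $n/2$ blocks each filled by a copy of $c$, is fixed by the transposition in $\mathfrak{S}_n$ that swaps the two positions of any single block. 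This transposition therefore acts as $+1$ on the cokernel, whereas it acts as $-1$ in the sign representation; hence $\mathrm{coker}(W_n^h) \cong U$, completing part (ii).

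The whole argument is essentially formal once Theorem \ref{1} is available; the only additional ingredient is the factor-swap symmetry of $c$, an immediate verification from its explicit formula. I do not foresee any genuine obstacle.
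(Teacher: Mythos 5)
Your argument is correct, and its overall skeleton --- additivity of characters along the two short exact sequences determined by the $\mathfrak{S}_n$-equivariant map $W_n^h$, which reduces everything to identifying $\mathrm{coker}(W_n^h)$ as an $\mathfrak{S}_n$-module --- is the same as the paper's. Where you genuinely diverge is the key step in part (ii): deciding whether the one-dimensional cokernel is the trivial or the sign representation. The paper settles this by appealing to Lemma \ref{sn}: only partitions with at most three rows (all of the same parity) occur in the decomposition of $\mathrm{Inv}(sl_{2}^{\otimes n})$, so the sign representation $V_{(1^n)}$ cannot appear for $n\geq 4$ and the only one-dimensional constituent available is the trivial one. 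You instead argue directly from the explicit generator: since $c$ is symmetric under the swap of its two tensor factors, the class of $c^{\otimes n/2}$ in the cokernel is fixed by a transposition, and since that class is nonzero by Theorem \ref{1}(iii) this rules out the sign representation. Both routes are sound; yours is more elementary and does not need Lemma \ref{sn} at this point (the paper needs that lemma anyway to make $\chi_{\mathrm{Inv}(sl_{2}^{\otimes n})}$ explicit), while the paper's version has the mild advantage of not requiring an explicit generator of the cokernel. Your reading of the spanning element as $c^{\otimes n/2}$, i.e.\ $W(\cup^{\otimes n})$ in the paper's notation, is the intended one.
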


Although the proof of Theorem \ref{1} is mainly combinatorial, it heavily relies on the following algebraic result.
\begin{theorem*}[Theorem \ref{JBbasis}]
The set 
 $$ \mathfrak{I}_n := \{  W(T) \textrm{ ;  $T$ is a Riordan tree of order $n$} \} $$
forms a basis for $\inv(sl_2^{\otimes n})$. 
\end{theorem*}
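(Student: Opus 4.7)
The plan is to combine a dimension count with a spanning argument: the number of Riordan trees of order $n$ is, by definition, the Riordan number $R_n = \dim \inv(sl_2^{\otimes n})$, so it suffices to show that $\mathfrak{I}_n$ spans $\inv(sl_2^{\otimes n})$ (or equivalently, is linearly independent in a space of matching dimension).

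For spanning, I would first invoke Lemma \ref{prop:connected}, together with the classical fact that all invariant tensors of $sl_2$ are built from iterated Lie brackets (whose structure constants are precisely what $W$ assigns to an internal trivalent vertex), to conclude that $\{W(T) : T\in \mathcal{C}_n \text{ with possibly repeated labels}\}$ already spans $\inv(sl_2^{\otimes n})$. The remaining task is to rewrite every such tree as a linear combination of Riordan trees modulo $\ker W$. For this, I would rely on the local AS and IHX identities, which hold exactly on $W$-images of trees (encoding antisymmetry and the Jacobi identity of $sl_2$), and argue by induction on a combinatorial complexity measure — for example, the number of crossings or nestings in a planar embedding compatible with the labeling — showing that each non-Riordan tree can be traded, via an IHX move, for a linear combination of trees of strictly lower complexity together with Riordan trees.

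The main obstacle will be exhibiting such a terminating reduction and verifying that it is well-defined. This is precisely where the modified dual canonical basis announced in the abstract should play its role: the dual canonical basis for invariant tensors of the adjoint representation of $U_q(sl_2)$, specialized at $q=1$, is indexed by non-crossing combinatorial objects counted by $R_n$, and the authors' modification should express each basis vector directly as $W(T)$ for a canonically associated Riordan tree $T$. Once this identification is established, the theorem follows at once from the fact that the dual canonical basis is a basis, with the combinatorial definition of Riordan trees tailored precisely so that the indexing is correct and the rewriting procedure above terminates.
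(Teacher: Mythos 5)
Your overall architecture is the right one, and your last paragraph essentially names the paper's actual strategy: the Frenkel--Khovanov dual canonical basis of $\inv_{U_{q}}(V_2^{\otimes n})$ is indexed by non-crossing data counted by $R_n$, and a suitable modification of it specializes at $q=1$ to the $W$-images of Riordan trees. But as written the argument has two genuine gaps. First, the preliminary ``spanning'' route is flawed: connected trees do \emph{not} span $\inv(sl_2^{\otimes n})$ for $n\ge 4$ even (the cokernel is spanned by $c^{\otimes n}$ --- this is exactly Theorem \ref{1}(iii), which is itself deduced from the statement you are trying to prove), so you must work with forests, i.e.\ disjoint unions of trees with each label used exactly once --- note that ``repeated labels'' would push the image out of $sl_2^{\otimes n}$ into higher-degree parts of $S^{\otimes n}$. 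Moreover, the reduction of an arbitrary such forest to Riordan trees cannot be achieved by AS and IHX alone, since those relations preserve the partition of the labels into connected components, whereas passing from a generic linear tree to a disjoint union of linear trees requires changing connectivity. The indispensable extra ingredient is the Chmutov--Varchenko relation (Lemma \ref{lem:CV}), which holds only after applying $W$ and is not a consequence of AS/IHX; without it your proposed induction on crossings or nestings has no reason to terminate in the span of Riordan trees.

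Second, the step you defer to ``the authors' modification'' is the entire substance of the proof, and it is not routine. The unmodified dual canonical basis vectors $f^0(T)$ do \emph{not} specialize to $W(T)$; the paper inserts a Jones--Wenzl projector $p_2$ at each internal edge of the FK diagram, proves that the resulting set $\mathfrak{B}^{JW}_n$ is unitriangular with respect to $\mathfrak{B}^{0}_n$ and hence a basis (Theorem \ref{modi}), and then computes explicitly (Proposition \ref{prop:rho}) that under the specialization $\rho$ at $q=1$ one has $\rho(f(T))=\frac{(-1)^{\mathrm{deg}(T)}}{2^{\mathrm{tri}(T)}}W(T)$, by matching the FK generators $\tilde c$, $\tilde b$ and the contraction $\varepsilon_2$ with $c$, $b$ and $\kappa$. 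Since these scalars are nonzero, $\mathfrak{I}_n$ is a basis. None of this is supplied in your proposal, so the proof is incomplete: the dimension count in your first paragraph is fine but carries no weight until one of the two halves (spanning or independence) is actually established.
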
 
Here, Riordan trees of order $n$ are a special class of elements of $\mathcal{B}^h(n)$ ; 
roughly speaking, a Riordan tree is a disjoint union of linear tree diagrams (i.e. of the shape of Figure \ref{fig:mTm}), whose label sets comprise a Riordan partition of $\{1,\ldots,n\}$ -- see Definition \ref{Riordan}.  

Theorem \ref{JBbasis} is proved using the work of Frenkel and Khovanov \cite{FK}, who studied graphical calculus for the dual canonical basis of tensor products of finite-dimensional  irreducible representations of $U_{q}(sl_{2})$.
More precisely, we define a new basis for $\inv_{U_{q}} (V_{2}^{\otimes n})$, the space of $U_{q}(sl_2)$-invariants of tensor products of the $3$-dimensional irreducible representation $V_{2}$, by inserting copies of the Jones-Wenzl projector in the dual canonical basis studied in \cite{FK}.
This basis is actually unitriangular to the Frenkel-Khovanov basis, see Theorem \ref{modi}.
The result is a graphical description of invariant tensors in terms of Jacobi diagrams ;  
see e.g.  \cite{kuperberg, cascade, westbury} for related graphical approaches to invariant tensors.  
We expect that this result and its possible generalizations could also be interesting from an algebraic point of view.  

\begin{remark}\label{rem:topopipo}
Consider the projection $Z^h$ of the Kontsevich integral $Z$ onto the space $ \mathcal{B}^{t,h}(n)$ of tree Jacobi diagrams labeled by distinct elements of  $\{1,\ldots, n\}$. 
In Proposition 10.6 of \cite{HMa}, Habegger and Masbaum show that, for string links, the leading term of $Z^h$ determines (and is determined by) the first non-vanishing Milnor link-homotopy invariants. 
The non-injectivity of the map $W^h_n$ for $n\ge 5$ tells us that, expectedly, 
this is in general no longer the case for quantum invariant $W\circ Z$ -- 
yet, it is remarkable that it still determines  the first non-vanishing Milnor link-homotopy invariants of length up to $5$.  
On the other hand, since $Z$ extends to a graded isomorphism on the free abelian group generated by string links, surjectivity of the map $W^h_n$ readily implies surjectivity of the linear extension of $W^h_n\circ Z$ (see also Remark \ref{rem:surj}). 
By Theorem \ref{1}, the surjectivity defect is given by $c^{\otimes n}$; it is not hard to check that, for a $2n$-component string link, the coefficient of $c^{\otimes n}$ in $W\circ Z$ is given by a product of linking numbers (this follows from a similar result at the level of the Kontsevich integral $Z$), and is in particular zero for string links with vanishing linking numbers. \\
Similar observations can be made for the universal $sl_2$ invariant, using Theorem 5.5 of \cite{JS}. 
\end{remark}

The rest of this paper is organized in three sections. 
In Section 2 we recall the definitions of Jacobi diagrams and the $sl_2$ weight system, and give a result which in particular implies Lemma \ref{prop:connected}. 
Section 3 introduces Riordan trees and the tree basis of $\inv(sl_2^{\otimes n})$, which are used to prove Theorem \ref{1}. 
Finally, in Section 4 we recall a few elements from the graphical calculus developed by Frenkel and Khovanov, and use it to prove Theorem \ref{JBbasis}.  
 
\begin{acknowledgments}
The authors are indepted to Daniel Tubbenhauer for insightful comments and stimulating discussions. 
They thank Naoya Enomoto for discussions concerning the content of Section \ref{module}, and Rapha\"el Rossignol for writing the code used in Section \ref{Ker}.
They also thank  Kazuo Habiro, Tomotada Ohtsuki and Louis-Hadrien Robert for valuable comments.  
The first author is supported by the French ANR research project ``VasKho'' ANR-11-JS01-00201.  
The second author is  supported by JSPS KAKENHI Grant Number 15K17539.
\end{acknowledgments}

\section{Jacobi diagrams and the $sl_2$ weight system}\label{sec:2}
In this section we give the definitions of the $sl_{2}$ weight system $W$ and proof of Lemma \ref{prop:connected}.  

\subsection{The Lie algebra $sl_{2}$ and its symmetric algebra}

Recall that the Lie algebra $sl_2$ is the 3-dimensional Lie algebra over $\mathbb{Q}$ generated by $h, e,$ and $f$ with Lie bracket
\begin{align*}
[h,e]=2e, \quad [h,f]=-2f, \quad [e,f]=h.
\end{align*}
Let $S=S(sl_{2})$ be the symmetric algebra of $sl_{2}$.
The adjoint action, acting as a derivation, endows $S$, and more generally $S^{ \otimes n }$ for any $n\ge 1$, with a structure of $sl_{2}$-modules.
Note that $sl_2^{ \otimes n }$, the $n$-fold tensor power of $sl_2$, is isomorphic to the subspace of $S^{ \otimes n }$ having degree one in each factor.

We denote by $\inv(S^{ \otimes n })$ and $\inv(sl_2^{ \otimes n })$ the set of invariant tensors of $S^{ \otimes n }$ and $sl_2^{ \otimes n }$, respectively (that is, elements that are mapped to zero when acted on by $h, e,$ and $f$).

\subsection{Jacobi diagrams}\label{sec:jacobi}

A \emph{Jacobi diagram} is a finite unitrivalent graph, such that each trivalent vertex is equipped with a cyclic ordering of its three incident half-edges. 
Each connected component is required to have at least one univalent vertex. 
An \emph{internal edge} of a Jacobi diagram is an edge connecting two trivalent vertices.   
The \emph{degree} of a Jacobi diagram is half its number of  vertices.

In this paper we call a simply connected (not necessary connected) Jacobi diagram a \textit{tree}. 
A tree consisting of a single edge is called a \emph{strut}. 

Let  $\mathcal{B}(n)$ be the completed $\mathbb{Q}$-space spanned by Jacobi diagrams 
whose univalent vertices are labeled by elements of $\{1,...,n\}$, 
subject to the AS and IHX relations shown in Figure \ref{fig:relations}.  
\begin{figure}[!h]
\includegraphics{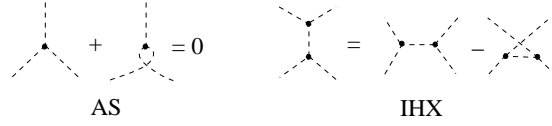}
\caption{The AS and IHX relations. } \label{fig:relations}
\end{figure}
Here completion is given by the degree.
Note that $\mathcal{B}(n)$ has an algebra structure with multiplication given by disjoint union.

Let  $\mathcal{B}^h(n)\subset \mathcal{B}(n)$ denote the subspace generated by Jacobi diagrams labeled by distinct\footnote{The superscript $h$ stands for `homotopy' since, as noted in the introduction, $\mathcal{B}^h(n)$ is the relevant space for link-homotopy invariants of (string) links.  } 
elements in $\{1,\ldots, n\}$. 
Note that $\mathcal{B}^{h}(n)$ is the polynomial algebra on the space $\mathcal{C}^h(n)$ of connected diagrams labeled by distinct elements in $\{1,\ldots,n\}$

As is customary, for each of the spaces defined above we use a subscript $k$ to denote the corresponding subspaces spanned by degree $k$ elements.

We denote by $\mathcal{C}_{n}$ the space of connected trees where each of the labels $1, \ldots, n$ appears exactly once.
It is a well-known fact, easily checked using the AS and IHX relations, that a basis for $\mathcal{C}_{n}$ is given by \emph{linear trees}, 
i.e. connected trees of the form shown in Figure \ref{fig:mTm}, 
where the labels $i_1$ and $i_n$ are two arbitrarily chosen elements of $\{1,\ldots,n\}$, and where 
$i_2,\ldots,i_{n-1}$ are running over all (pairwise distinct) elements of  $\{1,\ldots,n\}\setminus \{i_1,i_n\}$. 
 \begin{figure}[!h]
   \input{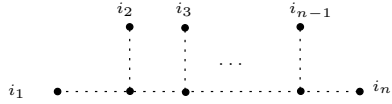}
    \caption{A linear tree Jacobi diagram. }\label{fig:mTm}
 \end{figure}
This shows that $\dim \mathcal{C}_{n}=(n-2)!$, as recalled in the introduction.  

\subsection{The $sl_{2}$ weight system}\label{sec:W}

We now define the \emph{$sl_{2}$ weight system}, which is a  $\mathbb{Q}$-algebra homomorphism
 $$ W\co \mathcal{B}(n) \to \inv(S^{\otimes n}). $$
Recalling that $\mathcal{B}(n)$ is (the completion of) the commutative polynomial algebra on the space of connected diagrams, 
it is enough to define it on the latter. We closely follow \cite[\S 4.3]{JS}.  

We will use the  non-degenerate symmetric bilinear form
 $$ \kappa\co  sl_2\otimes sl_2 \rightarrow \mathbb{Q} $$ 
 given by 
$$ \kappa(h,h)= 2, \quad \kappa(e,f)=1, \quad \kappa(h,e) = \kappa(h,f) = \kappa(e,e) =\kappa(f,f)= 0. $$
The bilinear form $\kappa$ identifies $sl_{2}$ with the dual Lie algebra $sl_{2}^{*}$.  
Note that, under this identification, $\kappa \in  (sl_2^{\otimes 2})^{*}\simeq sl_2^{*}\otimes sl_2^{*}$ 
itself corresponds to the quadratic Casimir tensor 
\begin{align}\label{c}
c=\frac{1}{2}h\otimes h+f\otimes e +e\otimes f \in \inv (sl_{2}^{\otimes 2}),
\end{align}
while the Lie bracket $[ -, - ]\in sl_2^{*} \otimes sl_2^{*} \otimes sl_{2}$  corresponds to the invariant tensor 
\begin{align}\label{csb}
\begin{split}
b &= \sum_{\sigma\in \mathfrak{S}_3} (-1)^{|\sigma |} \sigma (h\otimes e\otimes f)
\\
&=h\otimes e \otimes f + e\otimes f\otimes h + f\otimes h\otimes e 
 -h\otimes f\otimes e -f\otimes e\otimes h -e\otimes h \otimes f.
\end{split}
\end{align}
where $\sigma$ acts by permutation of the factors.

Let $D_{ij}$ be a strut with vertices labeled by $1\leq i, j\leq n$.
Rewriting formally (\ref{c}) as $c=\sum c_1\otimes c_2$, 
we set
$$ W(D_{ij}) = \sum 1\otimes \cdots \otimes c_1 \otimes \cdots  \otimes c_2 \otimes \cdots \otimes 1\in  \inv(S^{ \otimes n}),  $$
where $c_1$  and $c_2$ are at the $i$th and $j$th position, respectively. 

Now, let $m\geq 2.$
For a diagram connected diagram $D\in \mathcal{B}_m(n)$, attach a copy of $b\in \inv(sl_2^{\otimes 3})$ to each trivalent vertex of $D$, 
a copy of $sl_2$ being associated to each of the  $3$ half-edges at the trivalent vertex following the cyclic ordering.   
Each internal edge of $D$ is divided into to half-edges, and we contract the two corresponding copies of $sl_2$ by $\kappa$.
Fixing an arbitrary total order on the set of univalent vertices of $D$, we get in this way an element 
$x_{D}=\sum x_1\otimes \cdots \otimes x_{m+1}$ of $\inv (sl_2^{\otimes m+1})$, the $i$th factor corresponding to the $i$th univalent vertex of $D$. 
We then define $W(D) \in \inv(S^{\otimes n})$ by 
\begin{align}\label{formula}
 W(D)= \sum y_1\otimes \cdots \otimes  y_n,
\end{align}
where $y_j$ is the product of all $x_{i}\in sl_2$ such that the $i$th vertex is labeled by $j$.

It is known that $W$ is well-defined, i.e. is invariant under the AS and IHX relations.
The next lemma, due to Chmutov and Varchenko \cite{CV}, gives another relation satisfied by the $sl_2$ weight system.  
\begin{lemma}\label{lem:CV}
The $sl_2$ weight system $W$ factors through the CV relation below\\[0.1cm]
\begin{center}
\includegraphics{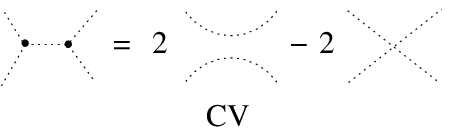}
\end{center}
\end{lemma}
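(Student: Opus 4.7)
The CV relation is a \emph{local} relation on Jacobi diagrams: the diagrams appearing in it all coincide outside a small disk $U$ and differ only by their configuration inside $U$. By the very construction of $W$ — attach a copy of $b$ at each trivalent vertex, contract each internal edge by $\kappa$, and take the product of $sl_2$-values at univalent vertices sharing a label — the contribution of the pieces outside $U$ is common to each term. So, using multilinearity, it suffices to check the relation at the level of the local pieces inside $U$, viewed as partial Jacobi diagrams with $k$ free external half-edges. This reduces Lemma \ref{lem:CV} to an identity in $sl_2^{\otimes k}$ among the tensors obtained by running the $W$-prescription on each of the local pictures that appear in the CV relation.

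The computation I would carry out has two steps. First, for each local piece $L$ in the CV relation, expand every $b$ via (\ref{csb}) as an alternating sum of six monomials in $\{h,e,f\}$ and, for each pair of factors corresponding to an internal edge of $L$, contract via $\kappa$ using $\kappa(h,h)=2$, $\kappa(e,f)=\kappa(f,e)=1$ and the vanishing of the other values. This yields an explicit element of $sl_2^{\otimes k}$ attached to $L$. Second, form the linear combination prescribed by the CV relation and verify that the result vanishes in $sl_2^{\otimes k}$. Since $W$ factors through the AS and IHX relations, one can use these freely during the simplification to keep the number of surviving terms manageable.

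The main obstacle is simply the bookkeeping: each $b$ contributes six terms, so contracting two $b$'s joined by an internal edge produces up to $36$ terms before cancellation, and a systematic organization (e.g.\ by weight under the $h$-action) is needed. It should also be emphasized that the identity is specific to $sl_2$: the analogous relation fails for $sl_n$ with $n\ge 3$. A conceptually cleaner route, which I would mention as a remark rather than the main argument, is to observe that $sl_2$ is the adjoint representation $V_2$ of itself, so that the Casimir $c$ acts on $V_2^{\otimes 2}\cong V_0\oplus V_2\oplus V_4$ with three distinct known eigenvalues. The local tensors in the CV relation can be rewritten as compositions of the corresponding spectral projectors built from $c$, and the required identity becomes a short finite linear check on the $V_0\oplus V_2\oplus V_4$ decomposition. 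Either way — direct expansion (as in the original argument of Chmutov--Varchenko \cite{CV}) or the spectral viewpoint — the lemma follows.
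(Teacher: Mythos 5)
Note first that the paper does not actually prove this lemma: it is stated with an attribution to Chmutov--Varchenko \cite{CV}, so there is no in-paper argument to compare against. Your strategy is the right one and is essentially the direct verification carried out in \cite{CV}: since $W$ is built by attaching $b$ at trivalent vertices, $c$ at struts, contracting internal edges by $\kappa$, and multiplying the outputs at like-labeled univalent vertices, multilinearity reduces the (local) CV relation to a single identity in $sl_2^{\otimes 4}$ between the tensor obtained from two copies of $b$ contracted along one edge and twice the difference of the two Casimir pairings of the four external legs. In the paper's normalization this is the familiar fact that the contracted structure constants of $sl_2$ satisfy an $\epsilon$-type identity, $\sum_{e}f^{abe}f^{cde}=2(\delta^{ac}\delta^{bd}-\delta^{ad}\delta^{bc})$ with respect to a $\kappa$-orthogonal basis. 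The one substantive gap is that you describe this computation rather than perform it: the entire content of the lemma is that finite check, and it should be written out (it is a few lines once one expands $b$ in the basis $h,e,f$ and uses $\kappa(h,h)=2$, $\kappa(e,f)=1$). Two further small points: you do not need AS and IHX during the verification --- the local identity holds on the nose --- and your remark that the relation can produce a circular component is handled by the paper's convention $W(\bigcirc)=3$, which is a consequence of, not an input to, the local identity. Your alternative spectral-projector argument via $V_2^{\otimes 2}\cong V_0\oplus V_2\oplus V_4$ is also valid and arguably cleaner, but it too ultimately rests on the same finite linear algebra.
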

Note that the CV relation is not degree-preserving. 
Note also that this relation might involve diagrams with a circular component : the value of $W$ on such component is set to $W(\bigcirc)=3=\dim sl_2$.  

\begin{remark}\label{rem:target}
It is worth noting here that the restriction of the $sl_2$ weight system to $\mathcal{C}_n$ takes values in $\inv (sl_{2}^{{\otimes n}})$.  
Likewise, the homotopy $sl_2$ weight system, i.e. its restriction to $\mathcal{B}^h(n)$, takes values in 
$\inv (\langle sl_{2}\rangle^{{\otimes n}})$, where $\langle sl_{2}\rangle^{{\otimes n}}=(\mathbb{Q} \oplus sl_{2})^{\otimes n}\subset S(sl_2)^{\otimes n}$ 
is the subspace of tensors having degree \emph{at most one} in each factor. 
\end{remark}

\subsection{The space $\mathcal{B}_{sl_2}(n)$ of $sl_2$-Jacobi diagrams}\label{sec:Bsl2}

In view of Lemma \ref{lem:CV}, it is natural to consider the following space. 
\begin{definition}\label{Bsl2}
The space of $sl_2$-Jacobi diagrams is the quotient space 
 $$ \mathcal{B}_{sl_2}(n) = \mathcal{B}(n)/CV,\bigcirc_3 $$ 
of $\mathcal{B}(n)$ by the ideal generated by the CV relation and the relation $\bigcirc_3$ that maps a circular component to a factor $3$. 
\end{definition}
Note that the algebra structure on $\mathcal{B}(n)$ descends to $\mathcal{B}_{sl_2}(n)$.  
This is however no longer a graded algebra (although one could impose such a structure by considering the number of univalent vertices). 

Since the $sl_2$ weight system factors through $\mathcal{B}_{sl_2}(n)$, it is useful to for our study to get some insight in this space.  
\begin{proposition}\label{prop:Bsl2}
  As an algebra, $\mathcal{B}_{sl_2}(n)$ is generated by (connected) trees.
\end{proposition}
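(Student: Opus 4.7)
The plan is to induct on the first Betti number $b_1(D)=|E(D)|-|V(D)|+1$ of a connected Jacobi diagram $D$. Since the algebra structure on $\mathcal{B}_{sl_2}(n)$ is given by disjoint union, a general diagram is the product of its connected components, so it suffices to show that the class of every connected $D$ lies in the subalgebra generated by connected trees.

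For the base case $b_1(D)=0$: the convention that every connected component of a Jacobi diagram carries at least one univalent vertex forces a loop-free connected $D$ to be a tree, and there is nothing to prove. (Purely circular components with no univalent vertices cannot appear in an honest Jacobi diagram, but they may be created by CV; when they do, the $\bigcirc_3$ relation replaces each such component by the scalar $3$.)

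For the inductive step, suppose $b_1(D)\ge 1$. Then $D$ contains a non-trivial cycle, and since univalent vertices are endpoints, any cycle is supported entirely on internal edges, so in particular there is some internal edge $e$ lying on a cycle. I apply the CV relation of Lemma \ref{lem:CV} locally at $e$. Inspection of the CV relation shows that it rewrites the local piece containing $e$ as a linear combination of diagrams in which $e$ is either deleted outright or replaced by a configuration that, combined with the two former trivalent endpoints of $e$, strictly decreases the quantity $|E|-|V|$; equivalently, each resulting diagram $D'$ satisfies $b_1(D')<b_1(D)$, after stripping off any circular components produced (which by $\bigcirc_3$ only contribute scalars). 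Applying the induction hypothesis to each connected component of each $D'$ completes the argument.

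The main technical obstacle is precisely the matching of the local picture of CV to a chosen internal edge on a cycle: a priori the diagram may not present itself in the exact shape required by the CV template. The remedy is standard: one uses the AS and IHX relations, already imposed on $\mathcal{B}(n)$, to locally reshape the neighborhood of $e$ (in particular, to shuffle the incidences at the two trivalent vertices of $e$) so that the CV pattern fits. Care must also be taken so that when CV disconnects $D$, the pieces fall on either side of the cycle being resolved and each piece has strictly smaller $b_1$ than $D$; this follows from the fact that $b_1$ is additive over connected components, together with the local decrease supplied by CV. Once these bookkeeping points are handled, the induction runs without incident.
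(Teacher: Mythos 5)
Your overall strategy --- resolve an internal edge on a cycle via the CV relation and induct --- is the same as the paper's, but the induction you set up does not close, and the gap is genuine. The problem is the claim that every diagram produced by a CV application at a cycle edge has strictly smaller first Betti number (or, in your patched version, that every connected piece of it does). The CV relation replaces the H-shaped neighbourhood of an internal edge by two resolutions of its four outgoing legs. For an edge lying on a cycle, one of the two resolutions shortens that cycle by two \emph{but keeps it}, while detaching the two branches that hung from the edge's trivalent endpoints and joining them to each other. Concretely, let $D=w_l$ be a wheel: a cycle of length $l\ge 4$ with $l$ labelled univalent legs. Applying CV at a cycle edge gives, up to the signs prescribed by the relation,
\[
w_l \;=\; \pm 2\,\bigl(w_{l-2}\sqcup S\bigr)\;\mp\; 2\,L,
\]
where $S$ is a strut and $L$ is a linear tree. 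The component $w_{l-2}$ has $b_1=1=b_1(w_l)$. Your Euler-characteristic count $\Delta(|E|-|V|)=-1$ is correct, but $b_1=|E|-|V|+c$ and the number of components $c$ increases by one in this term, so the total $b_1$ is unchanged; and additivity of $b_1$ over components does not rescue the argument, since the split-off piece is a tree carrying none of the loop, so the cycle-bearing component retains the full $b_1$ of $D$. An induction on $b_1$ alone therefore does not terminate.

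The paper closes exactly this gap with a \emph{double} induction: on the number of cycles and, secondarily, on the minimal length of a cycle. One CV resolution drops the cycle count; the other keeps the count but shortens the chosen cycle by two, so iterating drives its length down to $1$ (where AS kills the diagram) or $0$ (where $\bigcirc_3$ replaces the free circle by the scalar $3$), after which every surviving term genuinely has fewer cycles. A single induction on the total number of edges would also work, since each CV application removes three of them and never raises $b_1$ of any component; but some secondary measure of this kind is indispensable, and your write-up supplies none.
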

This in particular implies Lemma \ref{prop:connected} stated in the introduction.   

\begin{proof}
It suffices to prove that any connected Jacobi diagram in $\mathcal{B}_{sl_2}(n)$ can be expressed as a combination of trees. 
The proof is by a double induction, on the number of cycles in the diagrams and on the minimal length of the cycles
(the length of a cycle is the number of internal edges contained in it). \\  
Consider a connected diagram $C$ with $k$ cycles, and pick a cycle of minimal length $l$.  
If the cycle has length $l=0$, then the diagram $C$ is a loop, which can be replaced by a coefficient $3$ by the $\bigcirc_3$ relation. 
If $l=1$, then it follows from the AS relation that $C$ is zero. 
Now, if $l\ge2$, we can apply the CV relation at some internal edge of the cycle, which gives  
 \begin{equation} \label{W}
  \textrm{\includegraphics[scale=0.9]{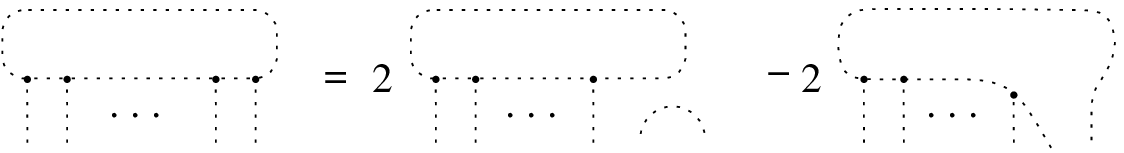}}, 
 \end{equation}
\noindent where the rightmost term is a diagram with $n-1$ cycles, 
and where the middle term has a cycle of length $l-2$.  
We can thus apply (\ref{W}) recursively to reduce the length of this cycle, until we obtain a cycle of length either $1$ or $0$, as above. 
Then $C$ writes as a combination of diagrams with less than $k$ cycles. 
This concludes the proof. 
\end{proof}

\section{Invariant tensors and the homotopy $sl_2$ weight system}\label{sec:3}
In this section we give a basis for $\inv (sl_{2}^{{\otimes n}})$ in terms of Riordan trees, 
and use this basis to prove Theorem \ref{1}. The kernel of the homotopy $sl_2$ weight system is briefly discussed at the end of the section.

\subsection{Tree basis of $\inv (sl_{2}^{{\otimes n}})$}\label{sec:basis}

We now construct a basis for $\inv (sl_{2}^{\otimes n})$, as the image by the $sl_{2}$ weight system of a certain class of connected tree Jacobi diagrams.   
For this, we need a couple extra definitions. 

On one hand, we call a linear tree \emph{ordered} if, in the notation of Figure \ref{fig:mTm}, its labels $i_1,\ldots,i_n$ satisfy $i_1 < i_2 <\ldots <i_n$. 

On the other hand, a \emph{Riordan partition} is a partition of $\{1,\ldots,n\}$ 
into parts that contains at least two elements, and whose convex hulls are disjoint when the points are arranged on a circle. 
For example, $\{\{1,4,5,9,10\},\{2,3\},\{6,7,8\}\}$ is a Riordan partition, as illustrated in Figure \ref{fig:catalan}, 
while $\{\{1,4,6\},\{2,3\},\{5,7,8\}\}$ is not.\footnote{A partition satisfying only the second condition is often called non-crossing.  } 
\begin{figure}[h!]
\includegraphics{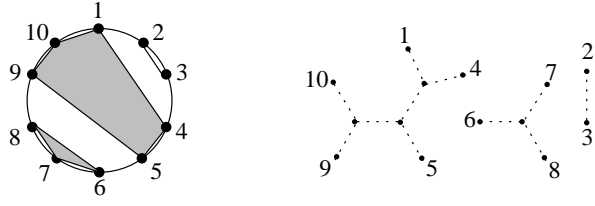}
\caption{The Riordan tree associated to the Riordan partition $\{\{1,4,5,9,10\},\{2,3\},\{6,7,8\}\}$. }\label{fig:catalan}
\end{figure}
The number of Riordan partitions of $\{1,\ldots,n\}$ is given by the Riordan number $R_n$ -- see \cite[\S 3.2]{B}.  

This leads to the following  
\begin{definition}\label{Riordan}
 A \emph{Riordan tree} of order $n$ is an element of $\mathcal{B}^h(n)$ such that
\begin{itemize}
 \item each connected component is an ordered linear tree, 
 \item the partition of $\{1,\ldots,n\}$ induced by its connected components is a Riordan partition. 
\end{itemize}
\end{definition}
See the right-hand side of Figure \ref{fig:catalan} for an example. 
Note that a Riordan partition uniquely determines a Riordan tree ; the number of Riordan trees of order $n$ is thus given by $R_n$.  

\begin{theorem}\label{JBbasis}
The set 
 $$ \mathfrak{I}_n := \{  W(T) \textrm{ ;  $T$ is a Riordan tree of order $n$} \} $$
forms a basis for $\inv(sl_2^{\otimes n})$. 
\end{theorem}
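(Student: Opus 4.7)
The plan is to lift the statement to the quantum setting and deduce it from a suitable triangularity result for bases indexed by crossingless matchings. First, I would identify $\inv(sl_2^{\otimes n})$ with the classical limit of $\inv_{U_q}(V_2^{\otimes n})$, the space of $U_q(sl_2)$-invariants in the $n$-fold tensor power of the $3$-dimensional simple module $V_2$. Since both spaces have dimension $R_n$ and the set of Riordan trees of order $n$ has cardinality $R_n$, the statement reduces to proving linear independence; by a standard semicontinuity argument, it suffices to exhibit a quantum lift of $\mathfrak{I}_n$ that is a basis of $\inv_{U_q}(V_2^{\otimes n})$.

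Next, I would invoke the graphical calculus of Frenkel--Khovanov, in which the dual canonical basis of $\inv_{U_q}(V_2^{\otimes n})$ is indexed by the admissible crossingless matchings of $2n$ boundary points obtained by viewing each $V_2$ as $V_1 \otimes V_1$ modulo a Jones--Wenzl projector $p_2$. The key construction is to modify each such basis vector by inserting a copy of $p_2$ at every one of the $n$ boundary groups, producing a new family indexed by the same combinatorial set. The diagrams obtained this way, after closing strands through the projectors, are exactly the diagrammatic analogs of Riordan trees: the $p_2$ at each site plays the role of the antisymmetrizer that forces a univalent vertex of a tree, and the residual matching collapses the remaining strands into disjoint ordered linear trees whose label groups form a Riordan partition.

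The central technical step, to be carried out in Section~4 and stated there as Theorem~\ref{modi}, is to show that this modified family is \emph{unitriangular} with respect to the Frenkel--Khovanov dual canonical basis, for a natural partial order on crossingless matchings (for instance, a lexicographic order refining the number of nested caps). The computation is local: inserting $p_2$ at a site, then expanding via the Jones--Wenzl recursion, yields the corresponding Frenkel--Khovanov vector plus a linear combination of matchings that are strictly smaller in the chosen order. Unitriangularity then immediately upgrades the modified family to a basis of $\inv_{U_q}(V_2^{\otimes n})$.

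Finally, I would conclude by specializing $q \to 1$. The modified basis elements become, up to nonzero scalars that persist in the limit, precisely the images under the $sl_2$ weight system of the associated Riordan trees, because the classical Jones--Wenzl projector on $sl_2^{\otimes 2}$ encodes the tree-bracketing rule used in the formula~(\ref{formula}) defining $W$ on linear trees. Hence $\mathfrak{I}_n$ is the $q=1$ specialization of a basis of $\inv_{U_q}(V_2^{\otimes n})$, and therefore a basis of $\inv(sl_2^{\otimes n})$. The main obstacle is establishing the unitriangularity in Theorem~\ref{modi}: this is where the insertion of the Jones--Wenzl projector must be shown to preserve just enough of the combinatorial structure of the Frenkel--Khovanov basis, and it is the only step that is not formal.
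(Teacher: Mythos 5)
Your overall architecture coincides with the paper's: lift to $\inv_{U_q}(V_2^{\otimes n})$, modify the Frenkel--Khovanov dual canonical basis by inserting Jones--Wenzl projectors, prove unitriangularity (Theorem \ref{modi}), and specialize at $q=1$ to identify the modified vectors with the $W(T)$. However, there is a concrete gap in your key construction: you insert a copy of $p_2$ ``at every one of the $n$ boundary groups'', i.e.\ just below the $n$ boxes representing $\pi_2\co V_1\otimes V_1\to V_2$. Since $\pi_2\circ p_2=\pi_2$ (the box $\pi_2$ already factors through the projector $p_2$ onto the $V_2$-summand), this insertion is the identity operation: your ``modified'' family is the Frenkel--Khovanov basis itself, the unitriangularity becomes vacuous, and --- this is the fatal part --- the resulting vectors do \emph{not} specialize to scalar multiples of the $W(T)$, so the theorem does not follow. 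The paper instead inserts one copy of $p_2$ into each pair of parallel arcs coming from an \emph{internal edge} of the Riordan tree $T$ (so the number of insertions is the number $i_T$ of internal edges, not $n$, and varies with $T$). That placement is what makes the specialization work: the contraction $\varepsilon_2$ dual to $\kappa$ at an internal edge satisfies $\varepsilon_{2}\circ (\pi_{2}\otimes \pi_{2})=\varepsilon_{1}\circ (1\otimes \varepsilon_{1}\otimes 1)\circ (p_{2}\otimes p_{2})$, i.e.\ it \emph{is} a $p_2$-insertion in the FK diagram, and this yields $\rho(f(T))=\frac{(-1)^{\deg(T)}}{2^{\mathrm{tri}(T)}}W(T)$ (Proposition \ref{prop:rho}).

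Two smaller corrections. The unitriangularity is not taken with respect to a lexicographic order on matchings but with respect to the partial order of internal-edge deletion: expanding each inserted $p_2=1-\frac{1}{[2]}(\textrm{cup--cap})$ gives $f(T)=f^0(T)+\sum_{T'\subset T}[2]^{-(i_T-i_{T'})}f^0(T')$, where $T'$ runs over the Riordan trees obtained from $T$ by erasing internal edges, which is manifestly unitriangular. Also, $p_2$ is the \emph{symmetrizer} onto the three-dimensional summand of $V_1\otimes V_1$, not an antisymmetrizer forcing a univalent vertex; its role is to encode the $\kappa$-contraction at internal edges, while the struts and trivalent vertices of $T$ are carried by the diagrams $D_c$ and $D_b$, which map to $-c$ and $\frac{1}{2}b$ under $\rho$.
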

We call this basis the \textit{tree-basis} of $\inv(sl_2^{\otimes n})$.
The proof of Theorem \ref{JBbasis} is postponed to Section \ref{sec:FK}, and is somewhat indirect. 
It uses the graphical calculus for the dual canonical basis for $\inv(V_2^{\otimes n})$ given by Frenkel and Khovanov in \cite{FK}.
Although a more direct proof may exist, we hope that the one given in this paper could be interesting from the representation theory point of view.

\begin{remark}\label{rem:surj}
Theorem \ref{JBbasis} implies immediately 
that the homotopy $sl_2$ weight system $W\co \mathcal{B}^h(n)\rightarrow \inv (\langle sl_{2}\rangle^{{\otimes n}})$ is surjective, 
and Theorem \ref{1} can be regarded as a refinement of this observation. 
(Recall that $\langle sl_{2}\rangle^{{\otimes n}}$ was defined in Remark \ref{rem:target}.)
\end{remark}

\subsection{Proof of Theorem \ref{1}}\label{Res}
The proof of Theorem \ref{1}  (i) is straightforward using Theorem \ref{JBbasis}: 
pick a basis for $\mathcal{C}_n$ in terms of linear trees, as outlined at the end of Section \ref{sec:jacobi},  
and write each basis element, using the CV relation, as the linear combination of basis Riordan trees of order $n$. It then suffices to check that, for $n\le 5$, the matrix obtained in this transformation has rank $(n-2)!$. 
Non-injectivity  for $n\ge 6$ is obvious since the dimension of the target space $\inv (sl_{2}^{\otimes n})$ is smaller than that of the domain $\mathcal{C}_{n}$. 

We now turn to the surjectivity results (ii) and (iii). 
Let $\mathcal{B}^{h}_{Y}(n)\subset \mathcal{B}^{h}(n)$ be the subspace of Jacobi diagrams with at least one trivalent vertex, and let 
$\mathcal{B}^{h}_{U}(n)=\mathcal{B}^{h}(n)\setminus \mathcal{B}^{h}_{Y}(n)$.  
Set
\begin{align*}
\mathfrak{I}_n^{Y}& := \{  W(T) \textrm{ ;  $T$ is a Riordan tree in $\mathcal{B}_{Y}^{h}(n)$} \},
\\
\mathfrak{I}_n^{U} &:= \{  W(T) \textrm{ ;  $T$ is a Riordan tree in $\mathcal{B}_{U}^{h}(n)$} \}. 
\end{align*}
Note that $ \mathfrak{I}_{n}= \mathfrak{I}_{n}^{Y}$ for $n$ odd, 
while  $ \mathfrak{I}_{n}= \mathfrak{I}_{n}^{Y}\cup  \mathfrak{I}_{n}^{U}$ for $n$ even.

Based on Theorem \ref{JBbasis} and this observation, points (ii) and (iii) of Theorem \ref{1} follow from the following two lemmas.
\begin{lemma}\label{c1}
If $T\in \mathcal{B}^{h}_{Y}(n)$, then 
$W(T)\in W\left(\mathcal{C}_n\right)$. 
In particular, $\mathfrak{I}_{n}^{Y}\subset W\left(\mathcal{C}_n\right)$.  
\end{lemma}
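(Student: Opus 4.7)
The plan is to prove the lemma by induction on the number $k$ of connected components of the Riordan tree $T$, working in $\mathcal{B}_{sl_2}(n)$; this is legitimate because, by Lemma~\ref{lem:CV}, the weight system $W$ factors through $\mathcal{B}_{sl_2}(n) = \mathcal{B}(n)/(\mathrm{CV},\bigcirc_3)$. It then suffices to show that the class of $T$ in $\mathcal{B}_{sl_2}(n)$ is a $\mathbb{Q}$-linear combination of classes of elements of $\mathcal{C}_n$.

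The base case $k=1$ is immediate: $T$ is then already a single ordered linear tree on $\{1,\ldots,n\}$, hence an element of $\mathcal{C}_n$.

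For the inductive step with $k\geq 2$, I would single out a component $T_0$ of $T$ having at least three labels, which exists by the hypothesis $T\in\mathcal{B}^h_Y(n)$. The non-crossing property of Riordan partitions supplies another component $T_1$ whose label set is adjacent to that of $T_0$ in the linear order---either nested between two consecutive labels of $T_0$ or lying immediately outside the span of $T_0$'s labels. I would then construct a connected tree $L\in\mathcal{C}_n$ obtained by splicing $T_1$ (and, inductively, the remaining components) into $T_0$ at this adjacency, so that $L$ is a linear tree on $\{1,\ldots,n\}$. Applying the CV relation at the edges introduced by the splicing---in the same spirit as in the proof of Proposition~\ref{prop:Bsl2}, but read as rewriting a connected configuration in terms of partially disconnected ones---one obtains a congruence
\[ L \;\equiv\; T \;+\; \sum_i \lambda_i\,T'_i \pmod{\mathrm{CV}}, \]
where each correction $T'_i$ is either itself an element of $\mathcal{C}_n$ or a tree diagram with strictly fewer connected components than $T$ while still containing a trivalent vertex inherited from the untouched part of $T_0$. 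The inductive hypothesis applies to each such $T'_i$, and a rearrangement gives $W(T) = W(L) - \sum_i\lambda_i W(T'_i)\in W(\mathcal{C}_n)$, as required.

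The main obstacle is the explicit CV bookkeeping in the splicing step: the CV relation is a local rewriting that produces only a few terms, but one must verify for each of them that it either is connected outright or satisfies the inductive hypothesis (strictly fewer components and a surviving trivalent vertex). The key enabling facts are (i) that $T_0$ has length at least three, so at least one trivalent vertex persists in every term produced by a single CV application, and (ii) that the non-crossing property of Riordan partitions gives enough flexibility in choosing the splicing position to make the component count decrease monotonically. A case analysis depending on whether the adjacent component $T_1$ is a strut or itself contains a trivalent vertex will be needed to finish.
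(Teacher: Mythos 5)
Your overall strategy --- induction on the number of connected components, merging components via the CV relation inside $\mathcal{B}_{sl_2}(n)$ --- is the same as the paper's, which packages the inductive step into a single identity (Figure \ref{fig:CV_connected}) expressing a $k$-component tree with a trivalent vertex as a combination of $(k-1)$-component trees each still having a trivalent vertex. However, your central congruence $L\equiv T+\sum_i\lambda_i T'_i$, with every correction $T'_i$ either connected or having strictly fewer components than $T$, fails for a single splicing, and this is a genuine gap rather than deferred bookkeeping. Applying CV once at the splicing edge of $L$ produces exactly two reconnection terms; if one of them is $T=T_0\sqcup T_1\sqcup\cdots$, the other is obtained from $T$ by exchanging two sub-branches between $T_0$ and $T_1$, and hence has \emph{exactly the same} number of components as $T$. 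So the induction on component count does not close as you have set it up. Note also that if a one-step reduction of this kind were available, it would apply just as well to a disjoint union of struts, contradicting Lemma \ref{c2}(i), which says $c^{\otimes n}\notin W(\mathcal{C}_n)$; this shows the trivalent vertex must enter the argument more actively than merely ``surviving'' into the correction terms, which is the only role your two enabling facts assign to it. (A secondary point: the non-crossing property of Riordan partitions is irrelevant here --- the lemma concerns arbitrary elements of $\mathcal{B}^h_Y(n)$, and Jacobi diagrams are abstract graphs, so no planarity constraint restricts the splicing.)

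The missing ingredient is a cancellation among these same-component-count correction terms, and this is where the trivalent vertex of $T_0$ is really used. One way to supply it: each splice-and-CV step shows that, modulo $\tfrac12 W(L)$ for some $(k-1)$-component tree $L$ with a trivalent vertex (hence modulo $W(\mathcal{C}_n)$ by the inductive hypothesis), $W(T)$ equals $W$ of the diagram obtained by exchanging one branch at the trivalent vertex $v$ of $T_0$ with a branch of another component. A cycle of three such exchanges restores the original partition into components but leaves two branches at $v$ transposed, which equals $-T$ by the AS relation; hence $2\,W(T)\equiv 0$ modulo $W(\mathcal{C}_n)$ and the induction closes. This cancellation (or the equivalent one encoded in Figure \ref{fig:CV_connected}) is the actual content of the lemma; your proposal identifies the right framework but asserts, rather than establishes, precisely this step.
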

For $n\ge 2$ even, let $\cup^{\otimes n}=\coprod_{i=1}^{n/2} D_{2i-1,2i}$ denote the tree diagram made of $n$ struts labeled by $i$ and $i+1$ ($1\le i\le n/2$). 
Note that $W(\cup^{\otimes n})=c^{\otimes n}\in \mathfrak{I}_{n}^{U}$.  
\begin{lemma}\label{c2} 
\begin{itemize}
\item[\rm{(i)}] We have $W(\cup^{\otimes n})\not \equiv 0$ modulo $W\left(\mathcal{C}_n\right)$.
\item[\rm{(ii)}] If  $T\in \mathcal{B}^{h}_{U}(n)$ with $n\geq 4$ even, then $W(T)\equiv W(\cup^{\otimes n})$ modulo $W\left(\mathcal{C}_n\right)$.  
\end{itemize}
\end{lemma}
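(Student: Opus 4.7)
The plan is to treat (i) as an immediate consequence of Theorem \ref{JBbasis} and Lemma \ref{c1}, and to reduce (ii) to a local identity that lets us swap two struts modulo $W(\mathcal{C}_n)$, which is then iterated over all perfect matchings.

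For (i), the point is that Theorem \ref{JBbasis} exhibits $\mathfrak{I}_n = \mathfrak{I}_n^Y \cup \mathfrak{I}_n^U$ as a basis of $\inv(sl_2^{\otimes n})$ (for $n$ even), so the subspaces spanned by $\mathfrak{I}_n^Y$ and $\mathfrak{I}_n^U$ sit in direct sum. By Lemma \ref{c1}, $W(\mathcal{C}_n)$ lies inside the span of $\mathfrak{I}_n^Y$, while $W(\cup^{\otimes n}) = c^{\otimes n}$ is a nonzero basis element of $\mathfrak{I}_n^U$; hence $W(\cup^{\otimes n})$ does not belong to $W(\mathcal{C}_n)$.

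For (ii), I would first note that a diagram $T \in \mathcal{B}^h_U(n)$ is a disjoint union of struts, i.e., encodes a perfect matching of $\{1,\ldots,n\}$. The main step is the following local claim: for any four distinct labels $a,b,c,d \in \{1,\ldots,n\}$ and any diagram $X$ made of struts on the remaining labels, the three strut products corresponding to the three pairings of $\{a,b,c,d\}$ (disjointly unioned with $X$) have $W$-values that all coincide modulo $W(\mathcal{C}_n)$. To establish this I would apply the CV relation (Lemma \ref{lem:CV}) to the internal edge of an H-shaped tree $H$ on labels $\{a,b,c,d\}$ disjointly unioned with $X$: CV expresses $W(H\cup X)$ as a signed combination of the $W$-values of the two smoothings of that edge, which correspond to two of the three pairings of $\{a,b,c,d\}$ extended by $X$. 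Since $H \cup X \in \mathcal{B}^h_Y(n)$, Lemma \ref{c1} gives $W(H\cup X) \in W(\mathcal{C}_n)$, which yields one of the equivalences; applying the same argument to a second H-shape on $\{a,b,c,d\}$ (obtained by an IHX move) produces the third pairing relation.

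With the local claim in hand, (ii) follows from the standard combinatorial fact that any two perfect matchings of $\{1,\ldots,n\}$ are connected by a finite sequence of local swaps of pairs, so that telescoping the local equivalences gives $W(T) \equiv W(\cup^{\otimes n}) \pmod{W(\mathcal{C}_n)}$. The main obstacle I foresee is the diagrammatic bookkeeping when applying CV to the H-tree: one must verify that the two resulting smoothings do give two genuinely distinct pairings of $\{a,b,c,d\}$ (any loop component that appears being absorbed by the $\bigcirc_3$ relation), so that combining with an IHX-related second H-shape does access all three pairings. Once that correspondence is settled, the rest of the argument is purely combinatorial.
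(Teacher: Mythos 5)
Your argument for part (ii) is correct and is essentially the paper's: a diagram in $\mathcal{B}^{h}_{U}(n)$ is $\cup^{\otimes n}$ with its labels permuted, and each transposition of labels is absorbed, via the CV relation applied to the internal edge of the corresponding H-shaped tree union the remaining struts, into $W\bigl(\mathcal{B}^{h}_{Y}(n)\bigr)\subset W(\mathcal{C}_n)$ by Lemma \ref{c1}. The two complications you anticipate do not arise: no closed component can appear since all labels are distinct, and you do not need the third pairing of $\{a,b,c,d\}$ at all, because transpositions already connect any perfect matching to $\cup^{\otimes n}$, so one H-tree per swap suffices.

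Part (i), however, has a genuine gap. You invoke Lemma \ref{c1} to claim that $W(\mathcal{C}_n)$ lies inside the span of $\mathfrak{I}_n^{Y}$, but Lemma \ref{c1} gives exactly the \emph{opposite} inclusion, $\mathfrak{I}_n^{Y}\subset W(\mathcal{C}_n)$. Since the span of $\mathfrak{I}_n^{Y}$ has codimension one in $\inv(sl_2^{\otimes n})$ by Theorem \ref{JBbasis}, the inclusion you actually have only tells you that $W(\mathcal{C}_n)$ is either that span or all of $\inv(sl_2^{\otimes n})$ --- and statement (i) is precisely the assertion that the second case does not occur. So your argument assumes the conclusion. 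What is missing is a proof that the coefficient of $c^{\otimes n}$ vanishes when $W(T)$, for $T$ a connected tree, is expanded in the basis $\mathfrak{I}_n$. The paper supplies this by defining the linear functional $\phi$ on $\inv(sl_2^{\otimes n})$ that kills $\mathfrak{I}_n^{Y}$ and sends $c^{\otimes n}$ to $1$, and then checking $\phi(W(T))=0$ for every linear tree $T$: repeated application of the CV relation reduces $T$ to a signed sum of $2^{v_T/2}$ strut diagrams (here $v_T$ is the number of trivalent vertices, which is even when $n$ is even), and the alternating signs make the contributions cancel in pairs. Some computation of this kind is indispensable; it cannot be replaced by a formal appeal to Lemma \ref{c1}.
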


\begin{proof}[Proof of Lemma \ref{c1}]
Let $T\in \mathcal{B}^h(n)$, containing at least one trivalent vertex, and let $k$ denote the number of connected components of $T$. 
If $k>1$, the equality depicted in Figure \ref{fig:CV_connected} 
\begin{figure}[!h]
  \includegraphics{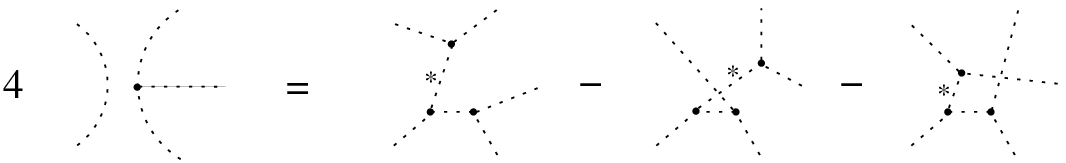}.
  \caption{Relation in $\mathcal{B}_{sl_2}(n)$, given by applying the CV relation at each of the $\ast$-marked edges on the right-hand side.} 
  \label{fig:CV_connected}
\end{figure}
shows how $T$ can be expressed as a combination of tree diagrams with $k-1$ components
in $\mathcal{B}^h_{sl_2}(n)$.  
Since each of these trees contains at least one trivalent vertex, the proof follows by an easy induction on $k$.
\end{proof}

\begin{remark}
Note that the proof applies more generally to the whole space $\mathcal{B}_{sl_2}(n)$. 
More precisely, any Jacobi diagram with at least one trivalent vertex decomposes as a combination 
of connected diagrams in $\mathcal{B}_{sl_2}(n)$.  
Combining this with Proposition \ref{prop:Bsl2}, we have that $\mathcal{B}_{sl_2}(2k+1)$ is generated, as a vector space, 
by connected tree Jacobi diagrams and that 
$\mathcal{B}_{sl_2}(2k)$ is generated by connected trees \emph{and} 
the disjoint union of $n$ struts $\sqcup_{i=1}^k D_{2i-1,2i}$.  
\end{remark}

\begin{proof}[Proof of Lemma \ref{c2}]
To show (ii), note that any $T\in \mathcal{B}^{h}_{U}(n)$ is obtained from  $\cup^{\otimes n}$ by exchanging some labels, 
which implies that $W(T)- W\left(\cup^{\otimes n}\right) \in W\left(\mathcal{B}^{h}_{Y}(n)\right)$ by Lemma \ref{lem:CV}.
Combining this with Lemma \ref{c1}, we have the assertion.

We now prove (i). 
 Consider the $\mathbb{C}$-linear map $\phi \co \inv (sl_{2}^{\otimes n}) \to \mathbb{C}$ defined (using Theorem \ref{JBbasis}) by
$$ \phi(t)= \left\{ \begin{array}{ll}
                 0 & \textrm{ for } \ t\in \mathfrak{I}_{n}^{Y}, \\
                 1 & \textrm{ for } \ t\in \mathfrak{I}_{n}^{\cup}.
                    \end{array} \right. $$

We prove that $W(\mathcal{C}_n)\subset \ker(\phi)$, which implies the assertion. 
It suffices to prove that $W(T)\in  \ker(\phi)$ for a connected tree diagram $T\in \mathcal{C}_n$ ; 
actually, as observed at the end of Section \ref{sec:jacobi}, we may further assume that $T$ is linear\footnote{This extra assumption is not necessary for the proof, 
but makes the arguments simpler to verify.  }.  
Notice that the number $v_T$ of trivalent vertices of $T$ is its degree minus $1$, and that applying the CV relation yields diagrams with $(v_T-2)$ trivalent vertices.  
If the degree of $T$ is odd, then by applying the CV relation repeatedly we obtain 
$$ T = 2^{v_T/2} \sum_{i=1}^{2^{v_T/2}} (-1)^{i} U_i, $$
where $U_i\in \mathcal{B}_{U}^{h}(n)$.  
Although this expression is not unique, this always yields $\phi(T)=0$.                
Now, in the case where $T$ has even degree, successive applications of the CV relation give 
$$ T = 2^{(v_T-1)/2} \sum_{i=1}^{2^{(v_T-1)/2}} (-1)^{i} Y_i, $$
where $Y_i$ has a single trivalent vertex (and $\frac{v_T-1}{2}=\frac{n}{2}-1$ struts). 
We thus obtain $\phi(T)=0$, as desired.  
\end{proof}

\subsection{$\mathfrak{S}_{n}$-module structure}\label{module}

For a partition $\lambda$ of $n$, let $V_{\lambda}$ denote the irreducible representation of $\mathfrak{S}_{n}$ associated to $\lambda$.  
Note that the adjoint representation of $sl_{2}$ corresponds to the vector representation $V$ of $SO(3)$,
and  the invariant part of $sl_{2}^{\otimes n}$ corresponds to the invariant part of $V^{\otimes n}$. 
The tensor powers of the vector representation of $GL(3)$ and its restriction to $SO(3)$ are well-studied classically, using e.g. Schur-Weyl duality or Peter-Weyl Theorem. In particular, we have the following. 
\begin{lemma}\label{sn}
As $\mathfrak{S}_{n}$-modules, we have 
$$ \mathrm{Inv}(sl_{2}^{\otimes n})\simeq \bigoplus V_{\lambda},$$
where the summation is over partitions $\lambda=(\lambda_{1}, \lambda_{2}, \lambda_{3} )$ of $n$ such that each $\lambda_{i}$ is odd or each $\lambda_{i}$ is even, i.e., such that $\lambda_{1}-\lambda_{2}, \lambda_{2}-\lambda_{3}\in 2\mathbb{Z}$.
\end{lemma}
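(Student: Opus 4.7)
The plan is to combine Schur--Weyl duality with the classical branching rule from $GL_3$ to $O_3$. The $\mathfrak{S}_n$-action on $\mathrm{Inv}(sl_2^{\otimes n})$ by permutation of factors commutes with the $sl_2$-action, and the latter integrates to an action of $SL_2$ that factors through the double cover $SL_2 \twoheadrightarrow SO_3 \hookrightarrow GL_3 = GL(sl_2)$; indeed the central element $-I \in SL_2$ acts trivially on $sl_2$ and hence on every tensor power, so $SL_2$-invariants coincide with $SO_3$-invariants.

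I would first apply Schur--Weyl duality for $GL_3 \times \mathfrak{S}_n$ acting on $(\mathbb{C}^3)^{\otimes n} \cong sl_2^{\otimes n}$:
$$sl_2^{\otimes n} \cong \bigoplus_{\lambda \vdash n,\ \ell(\lambda) \leq 3} S^\lambda(\mathbb{C}^3) \otimes V_\lambda,$$
where $S^\lambda$ is the Schur functor. Since taking $SO_3$-invariants respects the $\mathfrak{S}_n$-isotypic decomposition, we obtain
$$\mathrm{Inv}(sl_2^{\otimes n}) \cong \bigoplus_{\lambda} \bigl(S^\lambda(\mathbb{C}^3)\bigr)^{SO_3} \otimes V_\lambda,$$
and it remains to show that $\bigl(S^\lambda(\mathbb{C}^3)\bigr)^{SO_3}$ is one-dimensional when all parts of $\lambda$ share a common parity, and is zero otherwise.

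For this step I would invoke Littlewood's branching rule from $GL_3$ to $O_3$: the multiplicity of the irreducible $O_3$-representation $[O_3,\mu]$ in $S^\lambda(\mathbb{C}^3)$ equals $\sum_{\delta} c^\lambda_{\mu,\delta}$, where $\delta$ ranges over partitions with all even parts and $c^\lambda_{\mu,\delta}$ is the Littlewood--Richardson coefficient. Setting $\mu = \emptyset$ yields multiplicity $1$ precisely when $\lambda$ itself has all even parts. Setting $\mu = (1,1,1)$ (the determinant/pseudoscalar character of $O_3$), Pieri's rule for vertical strips forces $\lambda/\delta$ to be a vertical $3$-strip with one box in each of the three rows, so the multiplicity is $1$ exactly when $\lambda = \delta + (1,1,1)$ for some all-even $\delta$, i.e.\ when $\lambda$ has all three parts odd. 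Since both $[O_3, \emptyset]$ and $[O_3, (1,1,1)]$ restrict to the trivial $SO_3$-representation (the determinant character being trivial on $SO_3$), summing the two contributions gives $\dim \bigl(S^\lambda(\mathbb{C}^3)\bigr)^{SO_3} = 1$ precisely under the parity condition $\lambda_1 - \lambda_2,\ \lambda_2 - \lambda_3 \in 2\mathbb{Z}$, as required.

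The main subtlety is applying Littlewood's rule in the low-rank situation $n=3$, which lies outside its cleanest stable range; however only the two characters $\mu = \emptyset$ and $\mu = (1,1,1)$ are required here and both cases can be checked directly. A self-contained alternative, avoiding any appeal to $O_3$-branching, is to extract the multiplicity of the trivial $SU(2)$-summand in $S^\lambda(V_2)$ by evaluating the Schur polynomial $s_\lambda(q^2, 1, q^{-2})$ and reading off its weight-zero coefficient via the Weyl character formula; elementary parity considerations on the resulting expression reproduce the same characterization.
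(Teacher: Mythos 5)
Your argument is correct and is precisely the route the paper has in mind: the authors state Lemma \ref{sn} without proof, attributing it to the classical study of tensor powers of the vector representation of $GL(3)$ and its restriction to $SO(3)$ via Schur--Weyl duality, which is exactly the decomposition you carry out. Your caveat about Littlewood's branching rule outside the stable range is the right thing to flag, and is adequately covered by the direct verification for $\mu=\emptyset$ and $\mu=(1,1,1)$ (or by the self-contained evaluation of $s_\lambda(q^2,1,q^{-2})$).
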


Corollary \ref{cha} follows from Theorem \ref{1} and  Lemma \ref{sn} as follows.
\begin{proof}[Proof of Corollary \ref{cha}]
The fact that $\chi_{\mathrm{ker} (W_n^h)}=\chi_{\mathcal{C}_{n}}-\chi_{\mathrm{Inv}(sl_{2}^{\otimes n})}$ and $\chi_{\mathrm{Im} (W_n^h)}=\chi_{\mathrm{Inv}(sl_{2}^{\otimes n})}$
for $n=2$ or  $n> 2$ odd immediately follows from Theorem \ref{1}.
By Lemma \ref{sn},  the one dimensional representation appearing in the irreducible decomposition of $\mathrm{Inv}(sl_{2}^{\otimes n})$ 
is the trivial representation $U$.  
Thus we have that $\chi_{\mathrm{ker} (W_n^h)}=\chi_{\mathcal{C}_{n}}-\chi_{\mathrm{Inv}(sl_{2}^{\otimes n})}+ \chi_{U}$  
and $\chi_{\mathrm{Im} (W_n^h)}=\chi_{\mathrm{Inv}(sl_{2}^{\otimes n}) }- \chi_{U}$  
for $n\geq 4$ even. 
\end{proof}

The character  $\chi_{\mathcal{C}_{n}}$  is known as follows. 
\begin{proposition}[{Kontsevich \cite[Theorem 3.2]{K}}]\label{Kon}
As a $\mathfrak{S}_{n}$-module, the character of $\mathcal{C}_{n}$  is
{\fontsize{9pt}{0}
\begin{align*}
\chi _{\mathcal{C}_{n}}(1^{n})=(n-2)!, \ \  \chi_{\mathcal{C}_{n}}(1^{1}a^{b})=(b-1)!a^{b-1}\mu(a), \ 
\ \chi_{\mathcal{C}_{n}}(a^{b})=-(b-1)!a^{b-1}\mu(a),
\end{align*}}
and $\chi_{\mathcal{C}_{n}}(\ast)=0$ for other conjugacy classes. Here, $\mu$ is the
M\"obius  function.
\end{proposition}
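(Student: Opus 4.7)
The plan is to compute $\chi_{\mathcal{C}_n}$ by exploiting the close relationship between $\mathcal{C}_n$ and the classical representation $\mathrm{Lie}(n-1)$ of $\mathfrak{S}_{n-1}$, whose character is known explicitly: by the Klyachko--Brandt formula, $\chi_{\mathrm{Lie}(m)}(\sigma)=\mu(d)(m/d-1)!\,d^{m/d-1}$ when $\sigma$ has cycle type $d^{m/d}$ for some $d\mid m$, and is zero otherwise.

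First I would establish a ``leaf-pointing'' $\mathfrak{S}_n$-equivariant isomorphism
$$\mathcal{C}_n \otimes V_{\mathrm{perm}} \;\cong\; \mathrm{Ind}_{\mathfrak{S}_{n-1}}^{\mathfrak{S}_n} \mathrm{Lie}(n-1),$$
where $V_{\mathrm{perm}}\simeq\mathbb{Q}^n$ is the natural permutation representation. The bijection of bases sends a pair $(T, j)$, consisting of a tree $T\in \mathcal{C}_n$ together with a distinguished leaf label $j$, to the rooted Jacobi tree in $\mathrm{Lie}(\{1,\ldots,n\}\setminus\{j\})$ obtained by rooting $T$ at the leaf labelled $j$; compatibility with AS, IHX and the $\mathfrak{S}_n$-action is routine. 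Taking characters at $\sigma \in \mathfrak{S}_n$ yields
$$|\mathrm{Fix}(\sigma)| \cdot \chi_{\mathcal{C}_n}(\sigma) \;=\; \sum_{j \in \mathrm{Fix}(\sigma)} \chi_{\mathrm{Lie}(n-1)}\bigl(\sigma|_{\{1,\ldots,n\}\setminus\{j\}}\bigr).$$
For $\sigma$ with at least one fixed point, dividing by $|\mathrm{Fix}(\sigma)|$ and applying Klyachko--Brandt forces $\sigma$ to have cycle type either $1^n$ (yielding $\chi_{\mathcal{C}_n}(\mathrm{id})=(n-2)!$) or $1\cdot a^b$ with a unique fixed point (yielding $\chi_{\mathcal{C}_n}(\sigma)=\mu(a)(b-1)!\,a^{b-1}$) for the right-hand side to be nonzero. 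This handles the first two formulas and simultaneously forces $\chi_{\mathcal{C}_n}(\sigma)=0$ on all other cycle types admitting fixed points.

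The main obstacle is the fixed-point-free case, namely $\sigma$ of cycle type $a^b$ with $a\ge 2$, for which the leaf-pointing identity collapses to $0=0$. Equivalently, writing the identity in Frobenius-characteristic form as $\partial_{p_1}\tau = \sum_{m\ge 1}\mathrm{ch}(\mathrm{Lie}(m))$, with $\tau:=\sum_{n\ge 2}\mathrm{ch}(\mathcal{C}_n)$, the leaf-pointing determines $\tau$ only up to an unknown function of $p_2,p_3,\ldots$ -- precisely the fixed-point-free part of the character. To pin this down I would introduce a second identity obtained by pointing at an \emph{internal vertex} of the tree. Each tree in $\mathcal{C}_n$ has $n-2$ trivalent vertices, and cutting at one decomposes $T$ into three rooted subtrees arranged cyclically, giving an $\mathfrak{S}_n$-equivariant identity of the schematic form
$$(n-2)\cdot\mathcal{C}_n \;\cong\; \bigoplus_{(A_1,A_2,A_3)} \mathrm{Ind}_{\mathfrak{S}_{A_1}\times\mathfrak{S}_{A_2}\times\mathfrak{S}_{A_3}}^{\mathfrak{S}_n} \bigl(\mathrm{Lie}(A_1)\boxtimes\mathrm{Lie}(A_2)\boxtimes\mathrm{Lie}(A_3)\bigr)\big/\mathbb{Z}/3,$$
summed over cyclically ordered partitions of $\{1,\ldots,n\}$ into three non-empty parts. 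Evaluating the character at $\sigma$ of cycle type $a^b$: only $\sigma$-invariant partitions survive (those whose parts are each unions of $\sigma$-orbits); applying Klyachko--Brandt within each factor and invoking a M\"obius-inversion identity over the divisors of $b$ should collapse the sum to $-\mu(a)(b-1)!\,a^{b-1}$.

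The principal technical subtleties are (i) checking that the internal-vertex pointing gives a bona fide $\mathfrak{S}_n$-module isomorphism -- the IHX relation at edges incident to the pointed vertex translates into a non-trivial relation on the rooted-subtree side that must be reconciled carefully with the $\mathbb{Z}/3$ cyclic symmetry -- and (ii) extracting the minus sign in $-\mu(a)(b-1)!\,a^{b-1}$, which ultimately arises from the interaction between this cyclic symmetry and the M\"obius function appearing in Klyachko--Brandt. A cleaner alternative avoiding (i) is to invoke the known identification of $\mathcal{C}_n$ with the $\mathfrak{S}_n$-module underlying the cyclic Lie operad in arity $n$ (the Whitehouse extension of $\mathrm{Lie}(n-1)$ from $\mathfrak{S}_{n-1}$ to $\mathfrak{S}_n$; cf.\ Getzler--Kapranov), from which the fixed-point-free formula follows by standard character manipulations within cyclic operad theory.
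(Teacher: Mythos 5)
The paper offers no proof of this proposition: it is quoted verbatim from Kontsevich \cite[Theorem 3.2]{K}, so there is no internal argument to compare against and your proposal must stand on its own. Its first half does. The identity $\mathcal{C}_n\otimes V_{\mathrm{perm}}\cong \mathrm{Ind}_{\mathfrak{S}_{n-1}}^{\mathfrak{S}_n}\mathrm{Lie}(n-1)$ is just the projection formula $M\otimes\mathrm{Ind}\,\mathbf{1}\cong\mathrm{Ind}\,\mathrm{Res}\,M$ combined with the standard rooting isomorphism $\mathrm{Res}_{\mathfrak{S}_{n-1}}\mathcal{C}_n\cong\mathrm{Lie}(n-1)$ (AS and IHX become antisymmetry and Jacobi), and together with the Klyachko--Brandt character this correctly determines $\chi_{\mathcal{C}_n}$ on every conjugacy class possessing a fixed point, including all of the vanishing statements. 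You also correctly diagnose that this identity carries no information on fixed-point-free classes, which is precisely where the content of Kontsevich's formula beyond the classical $\mathrm{Lie}(n-1)$ character lies.

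The gap is that the fixed-point-free case is not actually closed. Your internal-vertex identity is false as written: already for $n=3$ the left-hand side $(n-2)\cdot\mathcal{C}_3$ is $1$-dimensional, while the right-hand side is $2$-dimensional, because the two cyclic orderings of $(\{1\},\{2\},\{3\})$ are distinct $\mathbb{Z}/3$-orbits yet correspond to the same tripod up to the AS sign; the quotient must therefore involve the sign representation of the full symmetry of the star at the marked vertex, not just $\mathbb{Z}/3$. More seriously, the well-definedness of $T\mapsto\sum_{v}(\text{decomposition at }v)$ modulo IHX is exactly the hard point (an IHX move changes the vertex set and redistributes the three branches), and it is deferred rather than addressed; the same goes for the M\"obius-inversion collapse and the sign, which are asserted as what ``should'' happen. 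The fallback of citing the cyclic-Lie-operad character from Getzler--Kapranov is essentially circular, since that computation is attributed to the very theorem of Kontsevich being proved. A repair that stays within your framework is to replace the tensor-with-$V_{\mathrm{perm}}$ identity by the single $\mathfrak{S}_n$-module isomorphism $\mathrm{Ind}_{\mathfrak{S}_{n-1}}^{\mathfrak{S}_n}\mathrm{Lie}(n-1)\cong\mathrm{Lie}(n)\oplus\mathcal{C}_n$ (equivalently, to identify the kernel of the natural surjection $\bigoplus_j\mathrm{Lie}(\{1,\dots,n\}\setminus\{j\})\to\mathcal{C}_n$, ``label the root by $j$'', with $\mathrm{Lie}(n)$; equivalently $\mathcal{C}_n\otimes V_{(n-1,1)}\cong\mathrm{Lie}(n)$). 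Two applications of Klyachko--Brandt then yield all four cases of the stated formula at once, including the minus sign on type $a^b$. But establishing that one isomorphism is the genuine content here, and until it (or a corrected, IHX-compatible version of your vertex-pointing identity) is proved, the case $\chi_{\mathcal{C}_n}(a^b)=-(b-1)!\,a^{b-1}\mu(a)$ remains unproven.
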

Thus by Corollary \ref{cha} we can calculate the character $\chi_{\mathrm{ker} (W_n^{h})}$ explicitly. See Figure \ref{fig:tab} for the low degree cases.  
\begin{figure}[!h]
\fontsize{5pt}{0} \selectfont
\begin{picture}(300,90)
\put(0,80){{\small $n=2:$} \quad $\Yvcentermath1 \young(\hfil\hfil)$}
\put(0,60){{\small $n=3:$} \quad $\Yvcentermath1 \young(\hfil,\hfil,\hfil)$}
\put(0,35){{\small $n=4:$} \quad  $\Yvcentermath1 \young(\hfil\hfil,\hfil\hfil)$}
\put(0,10){{\small $n=5:$} \quad $\Yvcentermath1 \young(\hfil\hfil\hfil,\hfil,\hfil)$}
\put(0,-20){{\small $n=6:$}  \quad $\Yvcentermath1 \young(\hfil\hfil\hfil\hfil,\hfil\hfil)  \  { \bigoplus }  \  {\color{magenta}\young(\hfil\hfil\hfil,\hfil,\hfil,\hfil)}  \  { \bigoplus }  \ \young(\hfil\hfil,\hfil\hfil,\hfil\hfil)$}
\put(0,-50){{\small $n=7:$} \quad $ \Yvcentermath1 \young(\hfil\hfil\hfil\hfil\hfil,\hfil,\hfil)  \  { \bigoplus }  \  {\color{magenta} \young(\hfil\hfil\hfil\hfil,\hfil\hfil,\hfil)}  \  { \bigoplus }  \ \young(\hfil\hfil\hfil,\hfil\hfil\hfil,\hfil)  \  { \bigoplus }  \ {\color{magenta}\young(\hfil\hfil\hfil,\hfil\hfil,\hfil,\hfil)}  \  { \bigoplus }  \ {\color{magenta}\young(\hfil\hfil,\hfil\hfil,\hfil,\hfil,\hfil)}$}
\put(0,-80){{\small $n=8:$} \quad $\Yvcentermath1 \young(\hfil\hfil\hfil\hfil\hfil\hfil,\hfill\hfill)  \  { \bigoplus }  \ {\color{magenta}\young(\hfil\hfil\hfil\hfil\hfil,\hfil\hfil,\hfil)}  \  { \bigoplus }  \ {\color{magenta}\young(\hfil\hfil\hfil\hfil\hfil,\hfil,\hfil,\hfil)}  \  { \bigoplus }  \ \young(\hfil\hfil\hfil\hfil,\hfil\hfil\hfil\hfil)  \  { \bigoplus }  \ {\color{magenta}\young(\hfil\hfil\hfil\hfil,\hfil\hfil\hfil,\hfil)}$  $  \  { \bigoplus }  \  {\color{magenta}\young(\hfil\hfil\hfil\hfil,\hfil\hfil,\hfil\hfil)}   \  { \bigoplus }  \  \young(\hfil\hfil\hfil\hfil,\hfil\hfil,\hfil\hfil)$}
\put(35,-120){$ \Yvcentermath1 \  { \bigoplus }  \  {\color{magenta}\young(\hfil\hfil\hfil\hfil,\hfil,\hfil,\hfil,\hfil)}  \  { \bigoplus }  \ {\color{magenta} \young(\hfil\hfil\hfil\hfil,\hfil\hfil,\hfil,\hfil)}  \  { \bigoplus }  \ {\color{magenta}\young(\hfil\hfil\hfil\hfil,\hfil,\hfil,\hfil,\hfil)}  \  { \bigoplus }  \ {\color{magenta}\young(\hfil\hfil\hfil,\hfil\hfil\hfil,\hfil,\hfil)}  \  { \bigoplus }  \  {\color{magenta}\young(\hfil\hfil\hfil,\hfil\hfil\hfil,\hfil,\hfil)}  \  { \bigoplus }  \ {\color{magenta}\young(\hfil\hfil\hfil,\hfil\hfil,\hfil\hfil,\hfil)}  \  { \bigoplus }  \ {\color{magenta}\young(\hfil\hfil\hfil,\hfil\hfil,\hfil,\hfil,\hfil)}  \  { \bigoplus }  \ {\color{magenta}\young(\hfil\hfil,\hfil\hfil,\hfil\hfil,\hfil\hfil)}  \  { \bigoplus }  \ {\color{magenta}\young(\hfil\hfil,\hfil\hfil,\hfil,\hfil,\hfil,\hfil)}$}
\end{picture}
\vspace{130pt}

\caption{Irreducible decompositions of $\mathcal{C}_{n}$, $2\leq n\leq 8$,  as $\mathfrak{S}_{n}$-modules. The red components are in the kernel of $W_{n}^{h}$.}\label{fig:tab}
\end{figure}

\subsection{Generating the kernel}\label{Ker}

It follows from Theorem \ref{1} that the dimension of the kernel of the weight system map $W_k$ is given by 
 $k! - R_k + \frac{1+ (-1)^k}{2}$.  
In this short section, we investigate some typical elements of this kernel. More precisely, we consider 
\emph{$1$-loop relators} of degree $k$, which are linear combinations of elements of $\mathcal{C}_{k+1}$ of the form 
 $$ L_1 - L_2 - R_1 + R_2, $$ 
where $L_1, L_2, R_1, R_2$ are 
degree $k$ tree Jacobi diagrams 
as shown in Figure \ref{fig:1loop}. 
\begin{figure}[!h]
  \input{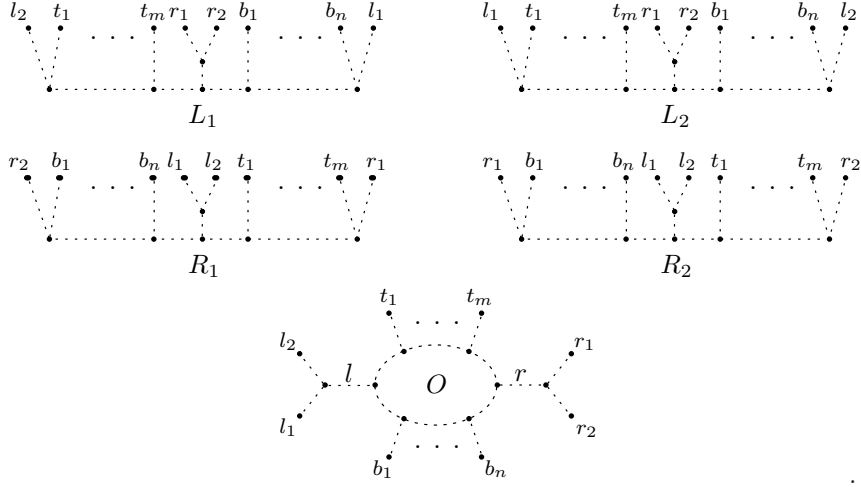}.
  \caption{The diagrams $L_1, L_2, R_1, R_2$ and $O$ ; here $m,n\ge 0$ are such that $k=(m+n+2)/2$. } 
  \label{fig:1loop}
\end{figure}

Let us explain why these are indeed mapped to zero by $W_k$. 
Denote by $O$ the element of $\mathcal{B}^h_{k+1}(k+1)$ represented in Figure \ref{fig:1loop}.    
We call such an element a \emph{$2$-forked wheel}.   
Now, by applying the CV relation at the internal edge $l$ of $O$ (see the figure), we have that 
 $$ W_k(O) = 2 W_k(L_1) - 2W_k(L_2), $$
while applying CV at internal edge $r$ yields
 $$ W_k(O) = 2 W_k(R_1) - 2W_k(R_2), $$
thus showing that $L_1 - L_2 - R_1 + R_2$ is in the kernel of $W_k$. 

Notice that, in degree $\le 5$, all $1$-loop relators are trivial, which agrees with the fact that the weight system map is injective. 
Computations performed using a code in \texttt{Scilab} 
allowed us to check that,  up to degree $k=8$,  the kernel of the weight system map $W_k$ is generated by $1$-loop relators of degree $k$.\footnote{The authors are indebted to Rapha\"el Rossignol for writing this code. } 
It would be interesting to see up to what degree this statement still holds, and what are the additional kernel elements when it doesn't.  

\section{The dual canonical basis and the $sl_{2}$ weight system}\label{sec:FK}

In this section, we review the graphical calculus used by Frenkel and Khovanov in \cite{FK} 
to describe tensor products of finite-dimensional irreducible representations of quantum group $U_q(sl_2)$. 
This graphical calculus for invariant tensors appeared originally in the work of Rumer, Teller and Weyl \cite{Weyl1932}, 
and was later adapted to the quantum setting in \cite{FK}. 

More precisely, we first recall in Section \ref{sec:41} some basic facts on $U_q(sl_2)$ and its representations, 
in Section \ref{FK} we recall the graphical calculus for the dual canonical basis 
for invariant tensors of $3$-dimensional irreducible representations of  $sl_2$,  
and in Section \ref{JB} we show that a simple modification of this basis 
is well-behaved with respect to the universal $sl_2$ weight system. 

\subsection{Quantum group $U_{q}(sl_{2})$ and finite-dimensional irreducible representations}\label{sec:41}

Let $U_{q}=U_{q}(sl_{2})$ be the algebra over $\mathbb{C}(q)$ with generators  $K, K^{{-1}}, E, F$ and relations
$$ KK^{-1}=K^{-1}K=1,
\quad 
KE=q^{2}EK,
\quad
KF=q^{-2}FK,
\quad
EF-FE=\frac{K-K^{-1}}{q-q^{-1}}, 
$$
\noindent for $q$ a non-zero complex indeterminate. 

For $n\ge 0$, denote by $V_n$ the fundamental $(n+1)$-dimensional irreducible representation of $U_q$, with basis 
$$ \{ v_i\textrm{ ; $-n\le i\le n$, $i=n$ (mod $2$)} \} $$ 
such that the action of $U_q$ is given by 
$$ Ev_i = \left[\frac{n+i+2}{2} \right] v_{i+2}, \quad Fv_i = \left[\frac{n-i+2}{2} \right] v_{i-2},\quad K^{\pm 1}v_i = q^{\pm 1}v_{i} $$
where $\left[ m \right]=(q^{m}-q^{-m})/(q-q^{-1})$ and  $v_{n+2}=v_{-n-2}=0$.

Let $\langle~ ,~ \rangle\co V_{n} \otimes V_{n}\to \mathbb{C}(q)$ be the symmetric bilinear pairing defined by 
$$ \langle v_{n-2k},v_{n-2l}\rangle =\frac{[n]!}{[k]![n-k]!} \delta_{k,l}\textrm{ ; }0\le k,l\le n,  $$
where $[k]!:=\prod_{i\le k} [i]$, 
and let $\{ v^i\textrm{ ; $-n\le i\le n$, $i=n$ (mod $2$)} \}$ be the dual basis with respect to this pairing.
In particular, for $n=1$, the dual basis is simply given by $v^i=v_i$ ($i=\pm 1$), while for $n=2$, we have 
$v^2=v_2$, $v^0=\frac{1}{[2]}v_0$ and $v^{-2}=v_{-2}$.  

We also define the bilinear pairing  $\langle~ ,~ \rangle$ of $V_{n_1}\otimes\ldots \otimes V_{n_m}$ and $(V_{n_1}\otimes\ldots \otimes V_{n_m})^{*}=V_{n_m}\otimes\ldots \otimes V_{n_1}$
by\footnote{
The action of $U_{q}$ on tensor powers of irreducible representations 
is defined via the comultiplication map $\Delta$ in the Hopf algebra structure of $U_{q}$ ; the dual action with respect to $\langle~ ,~ \rangle$ is likewise given by $u(x\otimes y)=\bar \Delta(u)(x\otimes y)$, where $\bar\Delta (u)=(\sigma \otimes \sigma) \Delta (\sigma(u))$ with the bar involution 
$\sigma \co U_{q} \to U_{q}$. See e.g. \cite[Chap.~3]{jantzen} or \cite[\S~1]{FK} for the details. }
$$\langle v_{k_1}\otimes\ldots \otimes v_{k_m}, v^{k_m'}\otimes\ldots \otimes v^{k_1'} \rangle=\prod_{i=1}^m \delta_{k_i,k_i'}.  $$

We refer the reader to Chapters 2 and 3 of the book \cite{jantzen} for a much more detailed treatment of this subject.

\subsection{Graphical representations of the dual canonical basis for invariants tensor products}\label{FK}

In what follows, we will only deal with $1$ and $2$-dimensional representations, which is sufficient for the purpose of this paper. 
We thus only give a very partial overview of the work in \cite{FK}, where we refer the reader for further reading.   
We will mostly follow the notation of \cite{FK}.  

Let $\delta_1\co \mathbb{C}\rightarrow V_1\otimes  V_1$ denote the map defined by 
$$ \delta_1(\mathbf{1})=v^{1}\otimes v^{-1}-q^{-1} v^{-1}\otimes v^1. $$
In \cite[Thm. 1.9]{FK}, Frenkel and Khovanov showed that the intersection of the dual canonical basis of $V_1^{\otimes 2m}$ and the space $ \inv_{U_{q}}(V_1^{\otimes 2m})$ of invariant tensors forms a basis of $ \inv_{U_{q}}(V_1^{\otimes 2m})$: 
$$ \{ (\delta_1)^{2(m-1)}_{i_{m-1}}(\delta_1)^{2(m-2)}_{i_{m-2}}\cdots  (\delta_1)^{2}_{i_1} (\delta_1).\mathbf{1} \textrm{ ; $0\le i_j\le j$ for each index $1\le j\le m-1$} \},  $$
where $(\delta_1)_l^k: V_1^{\otimes k}\rightarrow V_1^{\otimes k+2}$ is 
defined by $(\delta_1)_l^k=\mathbf{1}^{\otimes l}\otimes \delta_1\otimes \mathbf{1}^{\otimes (k-l)}$ ($0\le l\le k$).

Graphically, $V_1^{\otimes 2m}$ is represented by $2m$ fixed points on the $x$-axis of the real plane, and an element of the dual canonical basis of $\inv_{U_{q}}(V_1^{\otimes 2m})$ is represented by a union of $m$ non-intersecting arcs embedded in the lower half-plane and connecting these points, each arc corresponding to a copy of the map $\delta_1$. 
For example, the dual canonical basis of $\inv_{U_{q}}(V_1^{\otimes 4})$ consists of two elements $(\delta_1)^{2}_{0}.(\delta_1).\mathbf{1}$ and $(\delta_1)^{2}_{1}.(\delta_1).\mathbf{1}$, which are  represented by the two diagrams $D_1$ and $D_2$ in Figure \ref{fig:basis0}, respectively.  
\begin{figure}[h!]
\includegraphics{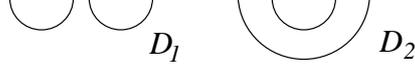}
\caption{FK diagrams representing the dual canonical basis of $ \inv_{U_{q}}(V_1^{\otimes 2})$.  }\label{fig:basis0}
\end{figure}

Now, it follows from \cite[Thm. 1.11]{FK}  that these basis elements induce a  basis $\mathcal{B}^0_{m}$ for $ \inv_{U_{q}}(V_2^{\otimes m})$, by taking their image under $\pi_2^{\otimes m}$, where $\pi_2\co V_1\otimes  V_1\rightarrow V_2$ is defined by 
 \begin{align*}
 \pi_2(v^1\otimes v^1)&=v^{2}, & \pi_2(v^{-1}\otimes v^{-1})&=v^{-2}, 
\\
 \pi_2(v^{1}\otimes v^{-1})&=q^{-1}v^{0}, & \pi_2(v^{-1}\otimes v^{1})&=v^{0}.
\end{align*}
The map $\pi_2$ is graphically represented by a box with two incident points (corresponding to the two copies of $V_1$) on its lower horizontal edge, see  Figure \ref{fig:basis}.  

Since $\pi_2\circ \delta_1=0$, a diagram containing a box whose incident points are connected by an arc is equal to zero. 
If there is no such  box, then this defines a non-trivial element of $\inv_{U_{q}} (V_2^{\otimes m})$. \\
In summary, an element of the dual canonical basis $\mathcal{B}^0_{m}$ of $\inv_{U_{q}} (V_2^{\otimes m})$ is graphically incarnated by $m$ horizontally aligned boxes, whose incident edges are connected by $m$ non-intersecting arcs, such that each arc is incident to two distinct boxes. 
\begin{remark}
Arranging the $n$ boxes on a circle, FK diagrams for elements of $\mathcal{B}^0_{m}$ 
naturally appear to be in one-to-one correspondence with (convex hulls of) Riordan partitions of $\{1,\ldots,n\}$.  
This agrees with the fact that the dimension of $\inv(V_2^{\otimes m})$ is given by the Riordan number $R_m$. 
\end{remark}

\begin{example}\label{ex:cb}
We conclude with a couple of examples. 
For $m=2$,  $\inv_{ U_{q}}(V_2^{\otimes 2})$ is spanned by $D_c$ in Figure \ref{fig:basis}, which represents the element 
\begin{eqnarray*}
\tilde{c}:& = & (\pi_2\otimes \pi_2)(\delta_1)^{2}_{1}.(\delta_1).\mathbf{1}  \\
& = & (\pi_2\otimes \pi_2)\left( v^{1}\otimes v^{1}\otimes v^{-1}\otimes v^{-1}  -  q^{-1} v^{-1}\otimes v^1\otimes v^{-1}\otimes v^{1}\right. \\
   &      &  \left.-  q^{-1} v^{1}\otimes v^{-1} \otimes v^{1}\otimes v^{-1}  +  q^{-2} v^{-1}\otimes v^{-1} \otimes v^{1}\otimes v^1 \right) \\
   &  =  &  v^{2}\otimes v^{-2} - (q^{-1}+q^{-3}) v^{0}\otimes v^{0}  + q^{-2} v^{-2}\otimes v^{2}.
\end{eqnarray*}
\begin{figure}[h!]
\includegraphics{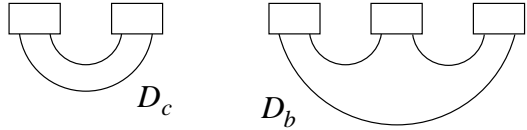}
\caption{FK diagrams representing the dual canonical bases for $ \inv_{U_{q}}(V_2^{\otimes 2})$ and  $ \inv_{U_{q}}(V_2^{\otimes 3})$.  }\label{fig:basis}
\end{figure}

Similarly, $ \inv_{U_{q}} (V_2^{\otimes 3})$ has dimension $1$ with basis given by the diagram $D_b$ represented in Figure \ref{fig:basis}. 
We leave it to the reader to verify that this diagram represents the element
$$ \tilde {b}:=v^{2}\otimes v^{0}\otimes v^{-2} + q^{-2} v^{0}\otimes v^{-2}\otimes v^{2} + q^{-2} v^{-2}\otimes v^{2}\otimes v^{0} + q^{-5} v^{0}\otimes v^{0}\otimes v^{0} $$
$$ - q^{-2} v^{2}\otimes v^{-2}\otimes v^{0} - q^{-2} v^{0}\otimes v^{2}\otimes v^{-2} - q^{-2} v^{-2}\otimes v^{0}\otimes v^{2} - q^{-1} v^{0}\otimes v^{0}\otimes v^{0}. $$
\end{example}

In the rest of this paper, we will use the term \emph{FK diagrams} to refer to this graphical calculus of Frenkel and Khovanov, 
and we will consider such diagrams up to planar isotopy (fixing only the $m$ boundary boxes corresponding to the $m$ copies of $V_2$ in $\inv(V_2^{\otimes m})$). 
We will also often blur the distinction between an invariant tensor and the FK diagram representing it.   

\subsection{The tree basis of $\inv_{U_{q}}(V_{2}^{\otimes n})$}\label{JB}
In this section, we modify the dual canonical basis $\mathcal{B}^0_{m}$ of $\inv_{U_{q}} (V_{2}^{{\otimes n}})$ recalled above and prove that, at $q=1$, this new basis corresponds to the tree basis of $\inv (sl_{2}^{{\otimes n}})$ defined in Section \ref{sec:basis}.

The only new ingredient is the \emph{Jones-Wenzl projector}
$$p_{2} \co  V_1\otimes  V_1 \rightarrow V_1\otimes  V_1$$ 
defined by
 \begin{align*}
 p_2(v^1\otimes v^1)&=v^1\otimes v^1, & p_2(v^{1}\otimes v^{-1})&=\frac{1}{[2]}\left(q^{-1} v^{1}\otimes v^{-1}+v^{-1}\otimes v^{1}\right), 
\\
 p_2(v^{-1}\otimes v^{-1})&=v^{-1}\otimes v^{-1}, & p_2(v^{-1}\otimes v^{1})&=\frac{1}{[2]}\left( v^{1}\otimes v^{-1}+qv^{-1}\otimes v^{1} \right).
\end{align*}
See Figure \ref{fig:CV2} for a graphical definition. 
\begin{figure}[h!]
\includegraphics{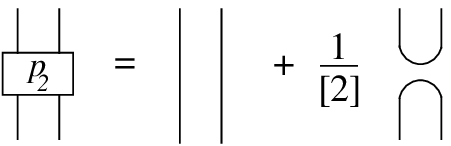}
\caption{The Jones-Wenzl projector $p_{2}\in \textrm{End}(V_1^{\otimes 2})$.  }\label{fig:CV2}
\end{figure}

Let $T$ be a Riordan tree of order $n$. 
We now define two elements $f^0(T)$ and $f(T)$ of $\inv_{U_{q}}(V_2^{\otimes n})$ using the graphical calculus introduced in the previous section. 
Consider a proper embedding $i(T)$ of $T$ in the lower-half plane, such that the $j$-labeled vertex is sent to the point $(j;0)$ 
and such that the cyclic ordering at each trivalent vertex agrees with the orientation of the plane. 
An example is given in Figure \ref{fig:ex_FK1}. 
Note that the Riordan property ensures that such an embedding exists. 
\begin{figure}[h!]
\includegraphics{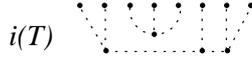}
\caption{The embedding $i(T)$ for the Riordan partition $\{ \{1,2,6,7,8\};\{3,4,5\} \}$.  }\label{fig:ex_FK1}
\end{figure}

We first describe the diagram defining $f^0(T)\in \inv_{U_{q}}(V_2^{\otimes n})$. 
First, replace each point $(j;0)$ by a box representing a copy of $V_2$ ($1\le j\le n$). Next, consider an annular neighborhood of $i(T)$ in the lower-half plane ; 
the boundary of this neighborhood is a collection of disjoint arcs connecting the $n$ boxes, thus providing an FK diagram for $f^0(T)$.  See Figure \ref{fig:ex_FK}.  
\begin{figure}[h!]
\includegraphics{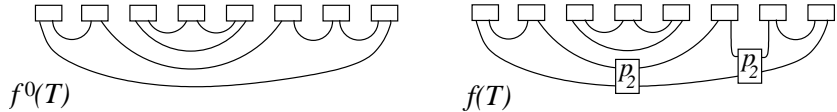}
\caption{The FK diagrams for $f^0(T)$ and $f(T)$, for the Riordan partition $\{ \{1,2,6,7,8\};\{3,4,5\} \}$.  }\label{fig:ex_FK}
\end{figure}
Note that we have the following reformulation for the dual canonical basis of Frenkel--Khovanov:  
 $$ \mathfrak{B}^{0}_n := \{  f^0(T) \textrm{ ;  $f$ is a Riordan tree of order $n$} \}. $$
Now, to obtain the diagram defining $f(T)$  we simply insert a copy of the Jones-Wenzl projector $p_2$ in the pairs of arcs of $f^0(T)$ 
induced by each internal edge of $T$ 
in the above procedure -  see the example of Figure \ref{fig:ex_FK}.  

We have 
\begin{theorem}\label{modi}
The set 
 $$ \mathfrak{B}^{{JW}}_n := \{  f(T) \textrm{ ;  $f$ is a Riordan tree of order $n$} \} $$
forms a basis for $\inv_{U_{q}}(V_2^{\otimes n})$. 
\end{theorem}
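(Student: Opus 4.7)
The plan is to establish unitriangularity of the change-of-basis matrix from the Frenkel--Khovanov basis $\mathfrak{B}^0_n$ to $\mathfrak{B}^{JW}_n$ with respect to the partition refinement partial order on Riordan trees. Since $\mathfrak{B}^0_n$ is already a basis of $\inv_{U_{q}}(V_2^{\otimes n})$ by \cite{FK} and $|\mathfrak{B}^{JW}_n|=|\mathfrak{B}^0_n|=R_n$, unitriangularity will immediately imply the theorem.

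The starting point is the standard decomposition $p_2 = \mathrm{id} - \frac{1}{[2]}\,\delta_1 \circ \epsilon_1$ of the Jones--Wenzl projector, where $\epsilon_1 \co V_1 \otimes V_1 \to \mathbb{C}(q)$ is the cap map adjoint to $\delta_1$. Applying this identity at each of the internal edges of $T$ expands $f(T)$ as a signed sum indexed by subsets $S$ of the set $E_{\mathrm{int}}(T)$ of internal edges of $T$. Graphically, choosing the identity summand along an edge leaves the two corresponding parallel boundary arcs unchanged, whereas choosing the singlet summand replaces them by two short cap/cup arcs sitting on either side of the edge. Topologically this amounts to cutting $T$ at that internal edge, so that the contribution of the subset $S$ is, up to a factor $(-1/[2])^{|S|}$, the FK diagram associated to the tree $T_S$ obtained from $T$ by severing every edge of $S$ and smoothing the resulting bivalent vertices.

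The third step is to verify that $T_S$ is again a Riordan tree and that its FK diagram coincides with $f^0(T_S)$. Each connected component of $T$ is an ordered linear tree, and cutting one of its internal edges splits it into two smaller ordered linear trees whose label sets are consecutive subsequences of the original one. The induced partition of $\{1,\ldots,n\}$ therefore refines the Riordan partition of $T$ and still satisfies the non-crossing condition, so $T_S$ is a Riordan tree. This yields the expansion
\begin{equation*}
  f(T) \;=\; \sum_{S \subseteq E_{\mathrm{int}}(T)} \left(-\tfrac{1}{[2]}\right)^{|S|} f^0(T_S).
\end{equation*}

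Since $T_S$ has strictly more connected components than $T$ whenever $S\neq\emptyset$, every nonzero off-diagonal entry of the transition matrix is indexed by a Riordan tree that is strictly finer than $T$ in the partition refinement order, while the diagonal entry (from $S=\emptyset$) is $1$. Any linear extension of the refinement partial order therefore realises this matrix as unitriangular, hence invertible, proving the theorem. I expect the main difficulty to lie in the graphical bookkeeping of the second step: one has to track how the cap/cup substitution at a given internal edge interacts with the trivalent vertices at its endpoints and with the Jones--Wenzl projectors sitting on the remaining internal edges adjacent to the cut, and check that after planar isotopy the resulting FK diagram coincides with the one built from the annular neighborhood of an embedding of the cut tree $T_S$.
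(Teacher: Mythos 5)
Your proposal is correct and follows essentially the same route as the paper: both expand the Jones--Wenzl projector at each internal edge to write $f(T)$ as $f^0(T)$ plus a combination of $f^0(T')$ over Riordan trees $T'$ obtained by severing internal edges of $T$, and conclude by unitriangularity against the Frenkel--Khovanov basis $\mathfrak{B}^0_n$. The only (immaterial) discrepancy is a sign: with the paper's conventions the closed circle evaluates to $-[2]$, so $p_2=\mathrm{id}+\frac{1}{[2]}\,\delta_1\circ\varepsilon_1$ and the off-diagonal coefficients are $[2]^{-|S|}$ rather than $(-[2])^{-|S|}$, which does not affect the triangularity argument.
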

\begin{proof}
Since there is a natural one-to-one correspondence between the set $\mathfrak{B}^{JW}_n$ and the basis $\mathfrak{B}^0_n$, 
it is enough to prove the independency of the elements in $\mathfrak{B}^{JW}_n$. 

So suppose that 
$$\sum_{T\in \mathbf{Rio}_n}  \alpha_T f(T) =0, $$ 
where the sum runs over the set $\mathbf{Rio}_n$ of Riordan trees of order $n$, and where $\alpha_T\in \mathbb{C}$. 
Using the formula for the Jones-Wenzl projector $p_2$ given by Figure \ref{fig:CV2}, one can express each $f(T)$ as a linear combination 
 $$ f(T) = f^0(T) + \sum_{T'\subset T} \frac{1}{[2]^{i_T-i_{T'}}} f^0(T'),$$
where the sum runs over all subtrees $T'$ obtained from $T$ by deleting internal edges, and where 
$i_t$ denotes the number of internal edges of a Riordan tree $t\in \mathbf{Rio}_n$.    
By substituting this identity in $\sum_{T\in \mathbf{Rio}_n}  \alpha_T f(T) =0$, we have that there exists complex numbers $\alpha'_T\in \mathbb{C}$ such that $\sum_{f\in \mathbf{Rio}_n}   \alpha'_T f^0(T) =0$, and a lower triangular matrix $A$ whose diagonal entries are all $1$ such that 
$( \alpha'_{T_{1}}, \ldots,   \alpha'_{T_{l}})^{t}=A( \alpha_{T_{1}}, \ldots,   \alpha_{T_{l}})^{t}$ for a suitably chosen order $\{T_1,\ldots ,T_l\}$ on $\mathbf{Rio}_n$. 
Since $\mathfrak{B}^{0}_n$ is a basis of $\inv_{U_{q}}(V_2^{\otimes n})$, we have $( \alpha'_{f_{1}}, \ldots,   \alpha'_{f_{l}})=0$, which implies that $\alpha_T=0$ for all $T\in \mathbf{Rio}_n$. 
This concludes the proof. 
\end{proof}

It turns out that this simple modification of the dual canonical basis of $\inv_{U_{q}}(V_2^{\otimes n})$ 
is directly related to the tree basis introduced in Section \ref{sec:basis}, as we now explain. 

Let $\rho\co \inv_{U_{q}} (V_{2}^{\otimes n}) \to \inv(sl_{2}^{\otimes n})$ be  the $\mathbb{C}$-linear map such that 
$$
\rho(q)=1, \quad \rho(v^{0})=\frac{1}{2}h, \quad  \rho(v^{2})=-e, \quad  \rho(v^{-2})=f.
$$
\begin{proposition}\label{prop:rho}
Let $T$ be a Riordan tree. If $\textrm{deg}(T)$ and $\textrm{tri}(T)$ denote the degree and number of trivalent vertices of $T$ respectively, then we have 
$$\rho\left( f(T) \right)=\frac{(-1)^{\textrm{deg}(T)}}{2^{\textrm{tri}(T)}}W(T). $$
\end{proposition}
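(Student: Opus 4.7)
The plan is a two-stage reduction: multiplicativity over connected components, followed by induction on the number $n$ of univalent vertices in a connected ordered linear tree. Since the components of a Riordan tree have pairwise disjoint convex hulls when embedded in the lower half-plane, $f(T_1 \sqcup \ldots \sqcup T_k)$ factors as the external product of the $f(T_i)$ at their respective label positions, and $\rho$ acts factor-wise on each tensor slot, so $\rho(f(T))= \prod_i \rho(f(T_i))$ inside $S(sl_2)^{\otimes n}$. Combined with multiplicativity of $W$ under disjoint union and additivity of $\deg$ and $\textrm{tri}$ over components, this reduces the proposition to the case of a single connected ordered linear tree.

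\textbf{Base cases.} For the strut $(n = 2,\ \deg = 1,\ \textrm{tri} = 0)$, Example \ref{ex:cb} gives $f(T) = \tilde c$, and setting $q = 1$ before applying $\rho$ yields $\rho(\tilde c) = -e \otimes f - \tfrac{1}{2} h \otimes h - f \otimes e = -c$, matching $\frac{(-1)^1}{2^0}W(D_{12})$. For the Y-tree $(n = 3,\ \deg = 2,\ \textrm{tri} = 1$, no internal edges so $f(T) = \tilde b$), the $v^0 \otimes v^0 \otimes v^0$ coefficient of $\tilde b$ has the form $q^{-5} - q^{-1}$ and vanishes at $q = 1$, while expanding the remaining six terms of $\rho(\tilde b)|_{q=1}$ (using $\rho(v^0) = \tfrac{1}{2}h$, $\rho(v^2) = -e$, $\rho(v^{-2}) = f$) yields exactly $\tfrac{1}{2} b = \frac{(-1)^2}{2^1} W(Y)$.

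\textbf{Inductive step.} For a connected ordered linear tree $T$ with $n \geq 4$ univalent vertices, realize $T$ as obtained from an ordered linear tree $T'$ with $n - 1$ univalent vertices by promoting a terminal univalent vertex of $T'$ to a trivalent vertex with two new univalent descendants (preserving the label ordering). At the FK level, this replaces the terminal box of $f(T')$ by a local \emph{Y-cap}: two new boxes whose arcs feed into the old diagram through a Jones--Wenzl projector $p_2$. At $q = 1$, $p_2$ specializes to the symmetric idempotent $\tfrac{1}{2}(1+\tau)$ on $V_1 \otimes V_1$, and the key claim is that, under $\rho$ at $q = 1$, this Y-cap operation corresponds precisely to inserting the invariant tensor $b$ at the new trivalent vertex and contracting with $\kappa$ along the new internal edge, with an overall scalar $-\tfrac{1}{2}$. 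This scalar is exactly the ratio of prefactors $\frac{(-1)^{\deg T}/2^{\textrm{tri}(T)}}{(-1)^{\deg T'}/2^{\textrm{tri}(T')}} = -\tfrac{1}{2}$, and the induced modification on the $sl_2$ side is exactly the change from $W(T')$ to $W(T)$, so the induction closes.

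\textbf{Main obstacle.} The essential nontrivial content of the proof is this local Y-cap identity: the composition of a JW-projector, two $\pi_2$ projections, and the new arcs produces, after applying $\rho$ at $q = 1$, precisely $-\tfrac{1}{2}$ times the $sl_2$-theoretic insertion of $b$ followed by $\kappa$-contraction. This is a direct tensor computation on a fixed small number of factors using the explicit $q = 1$ forms of $p_2$, $\pi_2$, and $\rho$, and is essentially a localized and parameterized version of the Y base case. No new combinatorial phenomena arise beyond what is already present in the strut and Y calculations, so once this local identity is verified, the induction carries the formula to all Riordan trees.
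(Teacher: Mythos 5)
Your proposal is correct and rests on exactly the same local identities as the paper's proof: the paper establishes $\rho(\tilde{c})=-c$, $\rho(\tilde{b})=\tfrac{1}{2}b$, and $(\varepsilon_{2})_{\vert q=1}=-\kappa\circ\rho$, and your ``Y-cap identity'' with scalar $-\tfrac{1}{2}$ is precisely the composite of the last two. The only difference is organizational: the paper matches all local pieces of the FK diagram (struts, trivalent vertices, internal edges) against the corresponding pieces of $W(T)$ in one global decomposition, rather than growing the tree leaf by leaf, which also absorbs your separate multiplicativity-over-components step.
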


It follows immediately that the tree basis of Section \ref{sec:basis} indeed is a basis for $\inv(sl_{2}^{\otimes n})$, as claimed in Theorem \ref{JBbasis}.  

\begin{proof}
The assertion follows essentially from the definitions. 
To see this, let us slightly reformulate the definition of $f(T)$, still in terms of FK diagrams but in a spirit that is closer to that of $W(T)$. 
For each strut component of $i(T)$, pick a copy of the diagram $D_c$ of Figure \ref{fig:basis}, and take a copy of the diagram $D_b$ for each trivalent vertex so that  
a copy of $V_2$ is associated to each of the incident half-edges following the cyclic ordering.  
For each internal edge of $i(T)$, we contract the two corresponding copies of $V_2$ by the map 
$ \varepsilon_{2}\co V_2\otimes V_2\rightarrow \mathbb{C}$  
defined by 
\begin{align*}
\varepsilon_{2}(v^{2}\otimes v^{-2})&=q^{2}, 
& 
\varepsilon_{2}(v^{0}\otimes v^{0})&=-\frac{1}{q^{-1}+q^{-3}}, 
\\
\varepsilon_{2}(v^{-2}\otimes v^{2})&=1, 
& 
\varepsilon_{2}(v^{i}\otimes v^{j})&=0, \quad \text{if} \quad i+j\not= 0.
\end{align*} 
As observed in \cite{FK}, we have the identity 
$$ \varepsilon_{2}\circ (\pi_{2}\otimes \pi_{2})=\varepsilon_{1}\circ (1\otimes \varepsilon_{1}\otimes 1)\circ (p_{2}\otimes p_{2}), $$
where $\varepsilon_{1}\co V_1\otimes  V_1 \rightarrow V_{0}$ is defined by 
$$ \varepsilon_{1}(v^{1}\otimes v^{-1})=-q \quad;\quad \varepsilon_{1}(v^{-1}\otimes v^{1})=1\quad;\quad 
\varepsilon_{1}(v^{1}\otimes v^{1})=\varepsilon_{1}(v^{-1}\otimes v^{-1}) =0. $$
This formula, as illustrated in Figure \ref{fig:e2} below, simply means that the map $\varepsilon_2$ is the insertion of a copy of $p_2$ at each internal edge. (Recall that $p_2$ is a projector, i.e. $p_2\circ p_2=p_2$.)  
\begin{figure}[h!]
\includegraphics{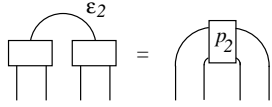}
\caption{Graphical definition of the contraction map $\varepsilon_2$.  }\label{fig:e2}
\end{figure}

So applying $\varepsilon_2$ in this way yields precisely the FK diagram for $f(T)$, where the box corresponding to the $i$-labeled vertex represents the $i$th copy of $V_2$.  
This is illustrated on an example in Figure \ref{fig:tree}.  
\begin{figure}[h!]
\includegraphics{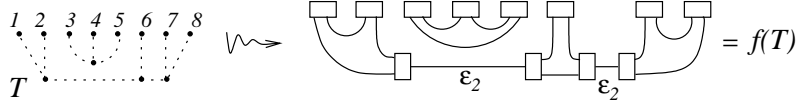}
\caption{Reformulating $f(T)$, for the Riordan partition $\{ \{1,2,6,7,8\};\{3,4,5\} \}$.  }\label{fig:tree}
\end{figure}

Now, it remains to observe that the elements $\tilde{c}$ and $\tilde{b}$, defined in Example \ref{ex:cb} and represented by the diagrams $D_c$ and $D_b$ respectively, correspond to the elements $c$ and $b$ of Equations (\ref{c}) and (\ref{csb}) via the map $\rho$ as follows:   
\begin{equation}\label{eq1}
 \rho( \tilde{c} ) = -c
\end{equation}
and 
\begin{equation}\label{eq2}
 \rho(\tilde{b})  = \frac{1}{2} b, 
\end{equation}
and that the contraction maps $\kappa$ and $\varepsilon_2$, used in the definitions of $W(T)$ and $f(T)$ respectively, are related by 
\begin{equation}\label{eq3}
(\varepsilon_2)_{\vert q=1} = -\kappa\circ \rho. 
\end{equation}
Notice in particular that the $\dfrac{1}{2^{\textrm{tri}(T)}}$ coefficient in the statement comes from the application of (\ref{eq2}) at each trivalent vertex, 
while the sign $(-1)^{\textrm{deg}(T)}$ is given by applying (\ref{eq1}) at each strut component (which has degree $1$), and (\ref{eq3}) at each internal edge (since the degree of a linear tree is the number of internal edges plus $2$). 
This concludes the proof. 
\end{proof}

\bibliographystyle{plain}
\bibliography{paper}
\end{document}